\documentclass{amsart}
\usepackage[top=0.75in]{geometry}
\usepackage{amsmath,amsthm,amssymb, float}
\usepackage{enumerate, xcolor}
\usepackage{comment, hyperref}
\usepackage{todonotes}
\usepackage{tabularray}

\newtheorem{thm}{Theorem}[section]
\newtheorem{lem}[thm]{Lemma}

\newtheorem{cor}[thm]{Corollary}
\newtheorem{conj}{Conjecture}

\theoremstyle{definition}

\DeclareMathOperator{\mex}{mex}
\DeclareMathOperator{\opt}{opt}

\DeclareMathOperator{\dist}{dist}

\title{Two Games on Arithmetic Functions: \textsc{SALIQUANT} and \textsc{NONTOTIENT}}

\begin{document}

\begin{abstract}
    We investigate the Sprague-Grundy sequences for two normal-play impartial games based on arithmetic functions, first described by Iannucci and Larsson in \cite{sum}. In each game, the set of positions is $\mathbb{N}$. In \textsc{saliquant}, the options are to subtract a non-divisor.  Here we obtain several nice number theoretic lemmas, a fundamental theorem, and two conjectures about the eventual density of Sprague-Grundy values.
    
    In \textsc{nontotient}, the only option is to subtract the number of relatively prime residues.  Here are able to calculate certain Sprague-Grundy values, and start to understand an appropriate class function.   
\end{abstract}

\author[P. Ellis]{Paul Ellis}
\address{Department of Mathematics, Rutgers University,
110 Frelinghuysen Road,
Piscataway, NJ,
08854,
USA}
\email{paulellis@paulellis.org}
\author[J. Shi]{Jason Shi}
\address{}
\email{jshi3@caltech.edu}
\author[T. A. Thanatipanonda]{Thotsaporn Aek Thanatipanonda}
\address{Science Division, Mahidol University International College,
999 Phutthamonthon Sai 4 Rd, Salaya, Phutthamonthon District,
Nakhon Pathom,
73170,Thailand}
\email{thotsaporn@gmail.com}
\author[A. Tu]{Andrew Tu}
\address{}
\email{atu@brunswickschool.org}

\maketitle

\section{Introduction}

In this paper, we study two of the games introduced by \cite{sum}.  
Their rules are as follows.
\begin{enumerate}[(a)]
    \item \textsc{Saliquant}.  Subtract a non-divisor:  For $n\geq 1$, $\opt(n)=\{n-k : 1\leq k \leq n : k\nmid n\}$.
    \item \textsc{Nontotient}.  Subtract the number of relatively prime residues: For $n\geq 1$, $\opt(n)=\{n-\phi(n)\}$, where $\phi$ is Euler's totient function.
\end{enumerate}

In each case, we examine the normal-play variant only, so the usual Sprague-Grundy theory applies.  
In particular, the \emph{nim-value} of a position $n$ is recursively given by $$\mathcal{SG}(n)=\mex\{\mathcal{SG}(x)\mid x\in\opt(n)\},$$ where $\mex(A)$ is the least nonnegative integer not appearing in $A$.
Chapter 7 of \cite{LIP} gives a readable overview for the newcomer.  Note that for games of no choice, such as \textsc{nontotient}, $\mathcal{SG}(n)$ calculates the parity of the number of moves required to reach a terminal position.  The sole terminal position for \textsc{nontotient} is $1$.

\section{Let's play \textsc{saliquant}!}
%
%
%
%
Inaucci and Larsson give a uniform upper bound for nim-values of \textsc{saliquant} positions and show that odd positions attain this bound:

\begin{lem}[\cite{sum},Theorem 4]\label{lem-Sal-prev results}
In \textsc{saliquant},
\begin{itemize}
    \item If $n$ is odd, then $\mathcal{SG}(n)=\frac{n-1}{2}$
    \item For all $n\geq 1$, $\mathcal{SG}(n)<\frac{n}{2}$
\end{itemize}  
\end{lem}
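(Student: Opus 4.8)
The plan is to prove both statements by strong induction on $n$, treating the odd case first since it feeds into the even case.

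The plan is to prove both bullets simultaneously by strong induction on $n$, since they are genuinely interdependent: establishing the odd-position formula will require the general upper bound at smaller even positions, while the upper bound at an even position $n$ will be deduced from the odd-position formula at smaller arguments. First I would record a convenient description of the move set. Writing $m=n-k$, the options of $n$ are exactly $\{0,1,\dots,n-1\}\setminus\{\,n-d : d\mid n\,\}$; in particular, taking $d=1$ shows $n-1$ is \emph{never} an option, and taking $d=n$ shows $0$ is never an option. After checking the base cases $n=1$ (odd, $\SG(1)=0$) and $n=2$ (even, terminal, $\SG(2)=0$), I proceed to the inductive step.

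For the odd case, let $n\ge 3$ be odd. Every divisor $d\mid n$ is odd, so each forbidden value $n-d$ is even; hence \emph{every} odd number in $\{1,3,\dots,n-2\}$ is an option. By the inductive hypothesis these odd options have $\SG$-values $(m-1)/2$, which run through all of $0,1,\dots,(n-3)/2$, so already $\mex\ge (n-1)/2$. To get equality I would bound every option from above: an odd option has $m\le n-2$, so $\SG(m)=(m-1)/2\le (n-3)/2$; an even option has $m\le n-1$, so by the inductive upper bound $\SG(m)<m/2\le (n-1)/2$, i.e.\ $\SG(m)\le (n-3)/2$. Thus no option attains $(n-1)/2$, and $\SG(n)=(n-1)/2$.

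For the upper bound at even $n$, the strategy is to exhibit a single skipped value below $n/2$, namely $n/2-1$. Because $n-1$ is always forbidden, every \emph{odd} option satisfies $m\le n-3$, whence $\SG(m)=(m-1)/2\le n/2-2$. Every \emph{even} option satisfies $m\le n-2$, so by the inductive upper bound $\SG(m)<m/2\le n/2-1$, i.e.\ $\SG(m)\le n/2-2$. Consequently no option has $\SG$-value equal to $n/2-1$, so $\mex\le n/2-1$ and therefore $\SG(n)<n/2$.

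The main obstacle is not any single computation but getting the induction to close: one must bundle the two statements into a single induction (each feeds the other) and verify that every quantity invoked refers to a strictly smaller position. The only real care is the parity bookkeeping — that $n-1$, and hence the top odd $\SG$-value $n/2-1$, is always forbidden, together with the off-by-one bounds $m\le n-2$ for even options and $m\le n-3$ for odd options — which is precisely what makes the value $n/2-1$ drop out.
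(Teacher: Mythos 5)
Your proof is correct, but there is nothing in the paper to compare it against: the paper does not prove this lemma at all, it simply imports it as Theorem 4 of Iannucci and Larsson \cite{sum}. So your argument should be judged on its own, and it holds up. The core observations are all accurate: the options of $n$ are exactly $\{0,1,\dots,n-1\}\setminus\{n-d : d\mid n\}$, so $n-1$ (from $d=1$) and $0$ (from $d=n$) are never options; for odd $n$ every forbidden value $n-d$ is even, so all of $1,3,\dots,n-2$ are options, giving $\mex\geq\frac{n-1}{2}$, while the inductive bounds on odd options ($m\leq n-2$) and even options ($\mathcal{SG}(m)<m/2\leq\frac{n-1}{2}$, hence $\leq\frac{n-3}{2}$ by integrality) give equality; and for even $n$ the exclusion of $n-1$ forces odd options down to $m\leq n-3$, so the value $\frac{n}{2}-1$ is skipped and $\mex\leq\frac{n}{2}-1$. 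The simultaneous strong induction closes, since each bullet at $n$ only invokes the two bullets at strictly smaller positions; the only step left implicit is that the second bullet for \emph{odd} $n$ follows trivially from the first via $\frac{n-1}{2}<\frac{n}{2}$, which is worth one sentence. A self-contained proof like yours is arguably a useful addition, since the paper leans on this lemma repeatedly (it drives Lemmas 2.2, 2.4, 2.7, and Theorem 2.8) while leaving its justification entirely external.
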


Our task, therefore, will be to investigate the nim-values of even positions.  The first few such values are:

\noindent
\begin{tabular}{c|ccccccccccccccccccccc}
    $n$ & $2$ &$4$  & $6$&  $8$& $10$&  $12$& $14$& $16$& $18$&  $20$ & $22$ & $24$ &  $26$& $28$&  $30$&  $32$&  $34$& $36$&  $38$&  $40$ &  $42$ \\
    \hline
    $\mathcal{SG}(n)$ &  $0$  & $1$ & $1$ & $3$ &$2$ & $4$ & $6$ & $7$ & $4$ & $7$ & $5$ &  $10$ &  $12$ &
    $10$ &  $13$ & $15$ & $8$ & $13$ &  $9$ &  $17$ & $17$\\
\end{tabular}

First we can establish some particular cases where the nim-value will be below the uniform upper bound given in the last part of the Lemma:

\begin{lem}\quad

\begin{itemize}
    \item If $3 \mid n$, then $\mathcal{SG}(2n) \leq n - 2$. 
    \item If $5 \mid n$, then $\mathcal{SG}(4n) \leq 2n - 3$.
\end{itemize}
\end{lem}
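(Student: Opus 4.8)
The plan is to use the mex characterization in its cleanest form: to prove $\mathcal{SG}(m)\le t$ it suffices to exhibit a single nonnegative integer $j\le t$ that is \emph{not} the Sprague--Grundy value of any option of $m$, since then the mex of the option values is at most $j$. So in each case I will fix a threshold value $t$ and argue that no option realizes it. The organizing idea is to split the options $m-k$ according to the parity of the subtracted non-divisor $k$: when $m-k$ is odd, Lemma~\ref{lem-Sal-prev results} gives the exact value $\mathcal{SG}(m-k)=\frac{(m-k)-1}{2}$, whereas when $m-k$ is even it gives only the strict bound $\mathcal{SG}(m-k)<\frac{m-k}{2}$. The odd options are therefore the only ones capable of taking values right up near the threshold, and they do so in a completely controlled, strictly monotone fashion.

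For the first bullet, take $m=2n$ and threshold $t=n-2$. Writing $k$ for the subtracted value, an odd option (so $k$ odd) has $\mathcal{SG}(2n-k)=n-\frac{k+1}{2}$, a strictly decreasing function of $k$ that equals $n-2$ exactly when $k=3$. But $3\mid n$ forces $3\mid 2n$, so $k=3$ is a divisor and hence an illegal move; thus no odd option hits $n-2$. Among the even options, $k=2$ is always a divisor of $2n$ and is excluded, while every even $k\ge 4$ gives $\mathcal{SG}(2n-k)<n-\frac{k}{2}\le n-2$, so these all land strictly below the threshold. Hence $n-2$ is missed entirely and $\mathcal{SG}(2n)\le n-2$.

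The second bullet runs along identical lines with $m=4n$ and $t=2n-3$. An odd option has $\mathcal{SG}(4n-k)=2n-\frac{k+1}{2}$, which equals $2n-3$ exactly when $k=5$; since $5\mid n$ gives $5\mid 4n$, this $k$ is a divisor and is excluded. Among even options, both $k=2$ and $k=4$ divide $4n$ and are excluded, and every even $k\ge 6$ satisfies $\mathcal{SG}(4n-k)<2n-\frac{k}{2}\le 2n-3$. This is precisely where the factor $4$ rather than $2$ is needed: the even numbers strictly below $5$ are $2$ and $4$, and taking $m=4n$ guarantees both are divisors of $m$. Again the threshold is unrealized, giving $\mathcal{SG}(4n)\le 2n-3$.

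I expect no serious obstacle; the only points needing care are bookkeeping ones. First, one must confirm that the odd-option formula singles out a \emph{unique} offending $k$ (immediate from strict monotonicity in $k$) and that no even option can secretly reach the threshold (this is the role of the strict inequality $\mathcal{SG}(\cdot)<\frac{\cdot}{2}$ together with the fact that the small even $k$ are forced divisors). Second, one should dispatch the small cases to ensure the relevant options actually exist and that every option is a genuine positive position; but since $k=2n$ and $k=4n$ are themselves divisors, no option equals $0$, and the bounds hold trivially for the initial multiples of $3$ and $5$. I would finally sanity-check against the given table (for instance $\mathcal{SG}(6)=1\le 1$, $\mathcal{SG}(12)=4\le 4$, $\mathcal{SG}(20)=7\le 7$, and $\mathcal{SG}(40)=17\le 17$) to confirm that these thresholds are in fact tight in several instances.
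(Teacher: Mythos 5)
Your proof is correct and takes essentially the same approach as the paper: both arguments rest on the observation that $1,2,3\mid 2n$ (respectively $1,2,4,5\mid 4n$) makes the small subtractions illegal, and then apply Lemma \ref{lem-Sal-prev results} to conclude that the value $n-2$ (respectively $2n-3$) is never attained by any option, with the lone option $4n-3$ of value exactly $2n-2$ handled as an exception above the threshold in both treatments. The paper phrases this as ``the largest possible option is $2n-4$'' rather than splitting options by parity and isolating the unique position that could realize the threshold, but the mathematical content is identical.
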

\begin{proof}
If $3 \mid n$, then $2n - 4$ is the largest possible option of $2n$.  So by Lemma \ref{lem-Sal-prev results} all options have a nim-value of at most $n-3$.  Hence $\mathcal{SG}(2n) \leq n - 2$.

Similarly, if $5 \mid n$, then $4n - 6$ is the largest possible option of $4n$, with the exception of $4n-3$.  So all options have a nim-value of at most $2n-4$, or exactly $2n-2$.  Hence $\mathcal{SG}(4n) \leq 2n - 3$.
\end{proof}
Note that the former bound is sharp. For example, setting $n = 15$, we see that $\mathcal{SG}(30)$ is
$13$.
Next we establish a uniform lower bound.

\begin{lem}\label{lem-Sal-recursive-lower-bound}
If $p$ is the smallest prime divisor of $n$, then $\mathcal{SG}(n) \geq \mathcal{SG}(\frac {p-1}{p}n)$; in particular, $\mathcal{SG}(2n) \geq \mathcal{SG}(n)$.
\end{lem}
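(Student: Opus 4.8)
The plan is to establish the key claim $\mathcal{SG}(2n) \geq \mathcal{SG}(n)$ first, since it is the concrete case, and then see whether the general statement follows by the same mechanism. The natural strategy in Sprague-Grundy theory for a lower bound of the form $\mathcal{SG}(2n) \geq v$ is to exhibit, for every target value $k < v$, an option of $2n$ whose nim-value equals $k$; this forces $\mex$ of the option set to be at least $v$. So I would try to show that for every $k$ with $0 \le k < \mathcal{SG}(n)$, the position $2n$ has some option with nim-value exactly $k$.

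First I would ask where the value $\mathcal{SG}(n)$ comes from at position $n$: by definition there is, for each $j < \mathcal{SG}(n)$, an option $n - k$ of $n$ (so $k \nmid n$, $1 \le k \le n$) with $\mathcal{SG}(n-k) = j$. The idea I would pursue is a ``doubling'' correspondence: I want to turn each such option $n-k$ of $n$ into a related option of $2n$ whose nim-value I can control. The cleanest hope is to use the odd positions, whose nim-values are known exactly from Lemma~\ref{lem-Sal-prev results} to be $\frac{m-1}{2}$. Since every odd number strictly between $0$ and $2n$ is an option of $2n$ precisely when the subtracted amount is a non-divisor, and odd subtrahends are automatically non-divisors of the even number $2n$ (an odd $k$ dividing even $2n$ is possible, e.g. $k=n$ when $n$ is odd, but such cases are few), the odd options $2n - k$ with $k$ odd realize a long run of consecutive nim-values $0,1,2,\dots$ up to roughly $n-1$. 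I would check precisely which odd positions below $2n$ are reachable and confirm that their nim-values $\frac{(2n-k)-1}{2}$ sweep out every integer in $[0, \mathcal{SG}(n))$, which would immediately give $\mathcal{SG}(2n) \geq \mathcal{SG}(n)$.

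For the general statement with smallest prime divisor $p$, I would replace ``doubling'' by the map $n \mapsto \frac{p}{p-1} \cdot \frac{p-1}{p} n = n$ viewed the other way: write $m = \frac{p-1}{p} n$, so $n = \frac{p}{p-1} m$, and I want $\mathcal{SG}(n) \geq \mathcal{SG}(m)$. The analogous plan is to realize each nim-value $j < \mathcal{SG}(m)$ as the value of some option of $n$. The subtrahend $n - m = \frac{1}{p} n$ is itself a non-divisor of $n$ (since $p$ is the smallest prime factor, $\frac{n}{p}$ does not divide $n$ unless $n/p \mid n$, which I would verify fails for the relevant range), so $m$ is literally an option of $n$, giving $\mathcal{SG}(n) \ge 1$ if $\mathcal{SG}(m) > \mathcal{SG}(n)$ were to fail only at the top. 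More robustly, I would again lean on odd options of $n$ when available, or on the structural fact that subtracting any non-divisor reaches a full initial segment of nim-values; the cleanest route is to show that the options of $n$ include a position of each nim-value $0, 1, \dots, \mathcal{SG}(m) - 1$.

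The main obstacle I anticipate is the divisibility bookkeeping: I must guarantee that the subtrahends I use are genuinely non-divisors of $n$ (and lie in the legal range $1 \le k \le n$), and that the resulting options have the nim-values I claim — which is only immediate for \emph{odd} options via Lemma~\ref{lem-Sal-prev results}. Controlling the nim-values of \emph{even} options is circular (that is exactly what we are trying to bound), so the crux is to find enough odd options of $2n$ (resp. $n$) to cover the required initial segment of values without appealing to unknown even nim-values. I expect the argument to go through because an even number has an abundance of odd non-divisors, but verifying that this supply is large enough to reach all the way up to $\mathcal{SG}(n)$ — rather than falling one short — is the delicate point, and I would handle the general-$p$ case by an analogous but more careful count of admissible subtrahends congruent to suitable residues modulo $p$.
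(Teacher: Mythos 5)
Your overall strategy---exhibit, for each $j < \mathcal{SG}\bigl(\tfrac{p-1}{p}n\bigr)$, an option of $n$ with nim-value exactly $j$---is sound, and the odd-options route can in fact be pushed through; but as written the proposal has two concrete defects. First, the supporting claim you make for the general case is flatly false: since $p \mid n$, the subtrahend $n - \tfrac{p-1}{p}n = \tfrac{n}{p}$ \emph{always} divides $n$, so $\tfrac{p-1}{p}n$ is never an option of $n$; there is nothing to ``verify for the relevant range.'' (Likewise, odd subtrahends are not ``automatically'' non-divisors of $2n$: e.g.\ $3 \mid 6$. The exceptional subtrahends are exactly the divisors of the odd part of $n$, and that is precisely the bookkeeping you postpone.) Second, the crux of your $p=2$ argument---that the odd options of $2n$ realize every nim-value in $[0, \mathcal{SG}(n))$---is the step you defer, and it is where all the work lies. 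It does hold, but only after the count is carried out: writing $t$ for the odd part of $n$, the odd positions that are \emph{not} options of $2n$ are exactly $2n-d$ with $d \mid t$, so the odd options realize every value in $\left[0,\, n - \frac{t+1}{2}\right)$, and one must then use both parts of Lemma \ref{lem-Sal-prev results} (the exact odd values when $n$ is odd, the bound $\mathcal{SG}(n) < n/2$ when $n$ is even) to verify $\mathcal{SG}(n) \le n - \frac{t+1}{2}$. When $n$ is odd this holds with equality, so the supply of odd options is exactly enough and not one value to spare---your worry about ``falling one short'' is legitimate and must be addressed, not waved at.

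The paper's proof avoids every bit of this with one observation you missed: since $p$ is the \emph{smallest} prime divisor of $n$, the largest proper divisor of $n$ is $\tfrac{n}{p}$, so any subtrahend $k$ with $\tfrac{n}{p} < k < n$ is automatically a non-divisor. Hence \emph{every} positive position strictly below $\tfrac{p-1}{p}n$ is an option of $n$. In particular, every option of $\tfrac{p-1}{p}n$ (each being a positive position below $\tfrac{p-1}{p}n$) is also an option of $n$, i.e.\ $\opt\bigl(\tfrac{p-1}{p}n\bigr) \subseteq \opt(n)$, and monotonicity of $\mex$ under set inclusion gives $\mathcal{SG}(n) \ge \mathcal{SG}\bigl(\tfrac{p-1}{p}n\bigr)$ at once. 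This needs no parity analysis, no known values of odd positions, and no counting of non-divisors; it also explains why the hypothesis names the smallest prime divisor---it is what makes $\tfrac{p-1}{p}n$ the threshold below which everything is reachable. Your plan can be completed, but it routes through Lemma \ref{lem-Sal-prev results} and a case split that the paper's one-line set-inclusion argument renders unnecessary.
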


\begin{proof}
Let $n-k<\frac{p-1}{p}n$, where $p$ is the smallest prime divisor of $n$.  Then $\frac{n}{p}<k<n$, and so $k\nmid n$.  Hence $n-k$ is an option of $n$. $n$ has every option which $\frac{p-1}{p}n$ has. Thus, $\mathcal{SG}(n) \geq \mathcal{SG}(\frac{p-1}{p})n$.
\end{proof}

\begin{cor}\label{lem-Sal-first lb for even}
$\mathcal{SG}(n) \geq \frac{n-2}{4}$ for all $n$.
\end{cor}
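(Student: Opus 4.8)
The plan is to split on the parity of $n$, dispatching odd $n$ immediately from Lemma~\ref{lem-Sal-prev results} and attacking even $n$ by counting options below $n/2$ rather than by iterating the halving inequality. For odd $n$, Lemma~\ref{lem-Sal-prev results} gives $\mathcal{SG}(n)=\frac{n-1}{2}=\frac{2n-2}{4}\ge\frac{n-2}{4}$, so nothing more is needed there.

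For even $n$ the temptation is to apply the ``in particular'' clause $\mathcal{SG}(2n)\ge\mathcal{SG}(n)$ of Lemma~\ref{lem-Sal-recursive-lower-bound} inductively, but this loses a factor of two at each halving and only yields $\mathcal{SG}(n)\ge\frac{n-4}{8}$, which is too weak. So I would instead reuse the mechanism inside the proof of Lemma~\ref{lem-Sal-recursive-lower-bound}: when $n$ is even (so $p=2$), every position $x$ with $1\le x<n/2$ arises as an option $n-k$, since $k=n-x$ then satisfies $n/2<k<n$ and hence $k\nmid n$.

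The key step is to feed the \emph{odd} options below $n/2$ into the mex. By Lemma~\ref{lem-Sal-prev results}, each odd $x$ with $1\le x<n/2$ has $\mathcal{SG}(x)=\frac{x-1}{2}$, and as $x$ ranges over $1,3,5,\dots$ up to the largest odd number below $n/2$, these values sweep out a complete initial segment $\{0,1,\dots,t\}$ of $\mathbb{N}$. Since this whole segment lies inside $\{\mathcal{SG}(x):x\in\opt(n)\}$, we get $\mathcal{SG}(n)=\mex\{\mathcal{SG}(x):x\in\opt(n)\}\ge t+1$. Finishing is a short parity computation: if $n\equiv 2\pmod 4$ the largest odd number below $n/2$ is $n/2-2$, giving $t+1=\frac{n-2}{4}$ exactly, while if $n\equiv 0\pmod 4$ it is $n/2-1$, giving $t+1=\frac{n}{4}\ge\frac{n-2}{4}$. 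The two smallest even cases $n\in\{2,4\}$, where ``the largest odd number below $n/2$'' degenerates, are checked by hand against the table.

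The main obstacle is recognizing that the natural inductive use of Lemma~\ref{lem-Sal-recursive-lower-bound} is too lossy, and that one should instead extract a full initial segment of Sprague--Grundy values from the odd options lying below $n/2$. Once that observation is in hand, only the careful bookkeeping of ``the largest odd number below $n/2$'' in the two residue classes modulo $4$ remains, and that is routine.
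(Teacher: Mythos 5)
Your proof is correct, but it routes around the paper's case structure in a way worth noting. The paper splits even $n$ into two subcases: for $n=2k$ with $k$ odd it invokes Lemma~\ref{lem-Sal-recursive-lower-bound} once to get $\mathcal{SG}(n)\geq\mathcal{SG}(k)=\frac{n-2}{4}$, and only for $4\mid n$ does it use the initial-segment argument, there applied to \emph{all} odd numbers above the odd part $k$ of $n$ (every odd number exceeding $k$ is a non-divisor), which yields the stronger bound $\mathcal{SG}(n)\geq\frac{n-k-1}{2}\geq\frac{1}{2}\left(\frac{3n}{4}-1\right)$. You instead handle all even $n$ uniformly: since every divisor of $n$ other than $n$ itself is at most $n/2$, every position below $n/2$ is an option, and the odd ones contribute the full initial segment $\{0,1,\dots,t\}$ of nim-values, forcing the mex above it. This avoids Lemma~\ref{lem-Sal-recursive-lower-bound} entirely and needs no decomposition $n=2^mk$; the price is a weaker bound (about $n/4$ rather than $3n/8$) in the $4\mid n$ case, though that is still all the corollary asks for, and your bound is tight exactly where the paper's is, namely $n\equiv 2\pmod 4$. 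Your parity bookkeeping checks out ($t+1=\frac{n-2}{4}$ when $n\equiv 2\pmod 4$, $t+1=\frac{n}{4}$ when $4\mid n$), and flagging $n=2$ as the degenerate case to verify by hand is right. One remark: the paper's stronger per-case bound is not wasted effort on its part, since the authors say the later Theorem~\ref{sal-thm-big-one} and Corollary~\ref{sal-cor-big1} were inspired by exactly that counting of odd options above the odd part, so your streamlined argument, while cleaner for this corollary in isolation, discards the seed of the sharper asymptotic lower bound $\frac{(2a+1)2^b}{2}-a-1$.
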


\begin{proof}
Lemma \ref{lem-Sal-prev results} establishes this for odd $n$.  

Next if $n = 2k$, where $k$ is odd, then Lemma \ref{lem-Sal-recursive-lower-bound} tells us that $\mathcal{SG}(n) \geq \mathcal{SG}(k) = \frac{k-1}{2} = \frac{n-2}{4}$. 

Now let $n = 2^mk$, where $k$ is odd and $m \geq 2$.  Then $n-(k+2), n-(k+4), \ldots ,1$ are all options of $n$, with nim-values $\frac{1}{2}(n-k-3), \frac{1}{2}(n-k-5), \ldots, 0$, respectively.  Thus $$\mathcal{SG}(n)
\geq\frac{1}{2}\left(n-k-1\right)\geq\frac{1}{2}\left(n-\frac{n}{4}-1\right)
>\frac{1}{2}\left(\frac{n}{2}-1\right)>\frac{n-2}{4}$$
\end{proof}
Next we prove a key lemma about the nim-values of even positions.

\begin{lem}\label{lemma-sal-divisibility}
If $\mathcal{SG}(2n) = n - k$, then $2k - 1 \mid n$.
\end{lem}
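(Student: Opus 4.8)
The plan is to exploit the fact, recorded in Lemma \ref{lem-Sal-prev results}, that odd positions have completely understood nim-values, and to track precisely which nim-values the \emph{odd} options of $2n$ realize. Write $\mathcal{SG}(2n) = n-k$. Since $\mathcal{SG}(2n) < n$ by the uniform upper bound and nim-values are nonnegative, we have $1 \le k \le n$, so that $2k-1$ is an odd integer in the range $[1, 2n-1]$.

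First I would describe the odd options of $2n$ explicitly. A move from $2n$ subtracts a non-divisor $d$ with $1 \le d \le 2n$, and the resulting position $2n-d$ is odd exactly when $d$ is odd. Since $\gcd(d,2)=1$, an odd $d$ divides $2n$ if and only if it divides $n$. Hence the odd options of $2n$ are precisely the positions $2n-d$ with $d$ odd, $1 \le d \le 2n-1$, and $d \nmid n$. Writing $d = 2j-1$ for $1 \le j \le n$ and applying Lemma \ref{lem-Sal-prev results} to the odd position $2n-d$, such an option carries nim-value $\frac{(2n-d)-1}{2} = n-j$.

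The key observation is then a clean dictionary: for each $1 \le j \le n$, the value $n-j$ is realized by an odd option of $2n$ if and only if $2j-1 \nmid n$. Now apply this with $j = k$. By the definition of $\mex$, the value $\mathcal{SG}(2n) = n-k$ is \emph{not} among the nim-values of any option of $2n$, in particular not among those of the odd options. By the dictionary, $n-k$ fails to be realized by an odd option exactly when $2k-1 \mid n$, which is the desired conclusion.

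I expect the only delicate points to be bookkeeping rather than conceptual: confirming the range $1 \le k \le n$ so that $d = 2k-1$ is a legal (and in-range) amount to subtract, and verifying the odd-divisor correspondence $d \mid 2n \Longleftrightarrow d \mid n$ for odd $d$. It is worth noting that the argument uses only that the mex value is \emph{not} attained among the options; the companion half of the mex definition, that every smaller value \emph{is} attained, plays no role here.
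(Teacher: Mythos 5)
Your proposal is correct and follows essentially the same route as the paper: both arguments observe that the odd position $2n-(2k-1)$ has nim-value $n-k$ by Lemma \ref{lem-Sal-prev results}, conclude from the $\mex$ definition that it cannot be an option of $2n$, hence $2k-1\mid 2n$, and then use oddness of $2k-1$ to get $2k-1\mid n$. Your version simply adds the bookkeeping (the range of $k$ and the odd-divisor dictionary) that the paper leaves implicit.
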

\begin{proof}
Suppose $\mathcal{SG}(2n) = n - k$. Then $2n$ has no option of nim-value $n - k$. Since $\mathcal{SG}(2n - 2k + 1)=n-k$, it is not an option.  In other words, $2k - 1 \mid 2n$ and hence $2k - 1 \mid n$.
\end{proof}

From here, we can establish nim-values of several particular cases of even numbers.  
To start, the previous result immediately narrows the possibilities of double a prime or semiprime.

\begin{cor}\label{cor-sal-double a prime}
Let $p,q$ be odd primes, then 
\begin{itemize}
    \item $\mathcal{SG}(2p) = p - 1$ or $p - \frac{p+1}{2} = \frac{p-1}{2}$; and
    \item $\mathcal{SG}(2pq) = pq - 1, pq - \frac{p+1}{2}, pq - \frac{q+1}{2}$, or $pq - \frac{pq+1}{2} = \frac{pq-1}{2}$.
\end{itemize}
\end{cor}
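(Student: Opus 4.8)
The plan is to apply Lemma \ref{lemma-sal-divisibility} directly, exploiting the fact that the numbers in question have very few divisors, so that only a handful of values of $k$ survive.

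First I would handle $\mathcal{SG}(2p)$. Write $\mathcal{SG}(2p) = p - k$. By Lemma \ref{lem-Sal-prev results} we have $\mathcal{SG}(2p) < p$, so $k \geq 1$, and $\mathcal{SG}(2p) \geq 0$ forces $k \leq p$; in particular $k$ is a positive integer. Lemma \ref{lemma-sal-divisibility} then gives $2k - 1 \mid p$. Since $p$ is prime, $2k - 1 \in \{1, p\}$, and as $p$ is odd both of these are odd, so each equation $2k - 1 = 1$ and $2k - 1 = p$ has an integer solution. Solving yields $k = 1$ (whence $\mathcal{SG}(2p) = p - 1$) or $k = \frac{p+1}{2}$ (whence $\mathcal{SG}(2p) = \frac{p-1}{2}$), exactly the two claimed possibilities.

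The semiprime case is the same argument with one extra layer of divisors. Writing $\mathcal{SG}(2pq) = pq - k$, the bounds from Lemma \ref{lem-Sal-prev results} again give a positive integer $k$ with $1 \leq k \leq pq$, and Lemma \ref{lemma-sal-divisibility} gives $2k - 1 \mid pq$. The divisors of $pq$ are $1, p, q, pq$, all odd, so setting $2k - 1$ equal to each in turn and solving for $k$ produces precisely the four claimed values $pq - 1$, $pq - \frac{p+1}{2}$, $pq - \frac{q+1}{2}$, and $\frac{pq-1}{2}$.

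I do not anticipate a genuine obstacle, since essentially all of the content lives in the already-established divisibility lemma. The two points that require a word of care are: (i) confirming $k \geq 1$, so that $2k - 1$ is an honest positive divisor rather than a nonpositive quantity — this is exactly where the uniform upper bound $\mathcal{SG}(n) < \frac{n}{2}$ from Lemma \ref{lem-Sal-prev results} is needed; and (ii) observing that every divisor of an odd number is odd, which is what guarantees that $k = \frac{d+1}{2}$ is an integer for each divisor $d$. One should also note that if $p = q$ in the second bullet, the two middle candidates coincide, so the enumerated list still correctly captures all possibilities.
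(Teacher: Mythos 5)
Your proof is correct and is exactly the argument the paper intends: the paper presents this corollary as an immediate consequence of Lemma \ref{lemma-sal-divisibility}, enumerating the (odd) divisors $1, p$ of $p$ and $1, p, q, pq$ of $pq$ to solve for $k$, just as you do. Your added care about why $2k-1$ must be a \emph{positive} divisor (via the upper bound in Lemma \ref{lem-Sal-prev results}) is a detail the paper leaves implicit, and it is handled correctly.
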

We next refine the first bullet point.
\begin{lem}
Let $p\geq 5$ be prime.  Then the only possible option of $2p$ with nim-value $\frac{p-1}{2}$ is $p+1$.  Hence \begin{itemize}
    \item $\mathcal{SG}(p+1)=\frac{p-1}{2} \implies \mathcal{SG}(2p)=p-1$
    \item $\mathcal{SG}(p+1)\neq\frac{p-1}{2} \implies \mathcal{SG}(2p)=\frac{p-1}{2}.$
\end{itemize}
\end{lem}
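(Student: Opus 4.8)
The plan is to determine the full option set of $2p$, and then use the divisibility constraint of Lemma \ref{lemma-sal-divisibility} to show that among all options only $p+1$ can carry the nim-value $\frac{p-1}{2}$; the two implications then follow from a short $\mex$ argument together with Corollary \ref{cor-sal-double a prime}.

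First I would record the options. The divisors of $2p$ are exactly $1,2,p,2p$, so the options are $\{2p-k : 3\le k\le 2p-1,\ k\ne p\}=\{1,2,\dots,2p-3\}\setminus\{p\}$. The odd options are immediately handled by Lemma \ref{lem-Sal-prev results}: an odd option $m$ has $\mathcal{SG}(m)=\frac{m-1}{2}$, and this equals $\frac{p-1}{2}$ only when $m=p$, which has been deleted from the option set. So no odd option has nim-value $\frac{p-1}{2}$.

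The crux is the even options. Suppose $m=2n$ is an even option with $\mathcal{SG}(2n)=\frac{p-1}{2}$. Writing this as $n-k$ with $k=n-\frac{p-1}{2}$, Lemma \ref{lem-Sal-prev results} gives $\mathcal{SG}(2n)<n$, so $k\ge 1$ and hence $d:=2k-1=2n-p=m-p$ is a positive odd integer. Lemma \ref{lemma-sal-divisibility} then forces $d\mid n=\frac{m}{2}=\frac{p+d}{2}$. Since $d$ is odd, this is equivalent to $d\mid p+d$, i.e. $d\mid p$, so $d\in\{1,p\}$ as $p$ is prime. The value $d=p$ gives $m=2p$, which is the position itself and not an option, while $d=1$ gives $m=p+1$. (Here $p\ge 5$ guarantees $p+1\le 2p-3$, so $p+1$ really is an option.) Thus $p+1$ is the unique option that can carry nim-value $\frac{p-1}{2}$.

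Finally I would assemble the implications. By Corollary \ref{cor-sal-double a prime} we already know $\mathcal{SG}(2p)\in\{p-1,\tfrac{p-1}{2}\}$. If $\mathcal{SG}(p+1)=\frac{p-1}{2}$, then $\frac{p-1}{2}$ occurs among the option values, so the $\mex$ cannot be $\frac{p-1}{2}$ and must therefore equal $p-1$. If instead $\mathcal{SG}(p+1)\ne\frac{p-1}{2}$, then by the previous paragraph no option has nim-value $\frac{p-1}{2}$, so $\mex\le\frac{p-1}{2}$; combined with the corollary this pins $\mathcal{SG}(2p)=\frac{p-1}{2}$. The only step requiring genuine care is the even-option analysis, where one must apply Lemma \ref{lemma-sal-divisibility} correctly and verify that the divisibility $d\mid\frac{p+d}{2}$ collapses to $d\mid p$; the rest is bookkeeping on the option set and a routine $\mex$ comparison.
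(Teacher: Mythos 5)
Your proof is correct and takes essentially the same route as the paper: odd options are eliminated via Lemma \ref{lem-Sal-prev results}, and even options $2n$ are constrained by Lemma \ref{lemma-sal-divisibility} to satisfy $2n-p\mid n$, forcing $2n=p+1$. Your concluding divisibility step (since $d=2n-p$ is odd and divides $n$, it divides $p+d$ and hence $p$) is in fact slightly cleaner than the paper's, which instead compares the quotient with $n$ using $n<p$; the rest, including the $\mex$ bookkeeping, matches.
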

Note that if $p=3$, then $p+1=4$ is not an option of $2p=6$.  
\begin{proof}
Let $x\in\opt(2p)$ such that $\mathcal{SG}(x) = \frac{p-1}{2}$.

By Lemma \ref{lem-Sal-prev results}, if $x$ were odd, then we would have $x=p$, but $p$ is not an option of $2p$.
So let $x=2n$ for some $n$.  By Lemma \ref{lemma-sal-divisibility}, since $\mathcal{SG}(2n)=\frac{p-1}{2}=n-(n-\frac{p-1}{2})$, we have $2(n-\frac{p-1}{2})-1 \mid n$, and so $2n-p \mid n$.  On one hand, this implies that $n<p$.

On the other hand, it means that we can write $n=d(2n-p)=2dn-dp$ for some $d\in\mathbb{N}$.  Then $n\mid dp$ and $d\mid n$.  Now since $n<p$ and $p$ is prime, we have $n\mid d$.  Finally, since $d\mid n$, this means $n=d$, and so $2n-p=1$ or $x=2n=p+1$.
\end{proof}

In fact, this is enough to generate infinitely many examples for which our uniform lower bound is attained.

\begin{thm}
 If $p$ is prime and $p \equiv 5 \bmod 6$, then $\mathcal{SG}(2p) = \frac{p-1}{2}$.
\end{thm}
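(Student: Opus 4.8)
The plan is to apply the lemma immediately preceding this theorem, which already reduces the claim to a single inequality. Since $p \geq 5$ is prime (the smallest prime congruent to $5$ modulo $6$ is $5$ itself), that lemma applies and tells us that $\mathcal{SG}(2p)$ equals either $p-1$ or $\frac{p-1}{2}$, the two cases being governed entirely by the value of $\mathcal{SG}(p+1)$. By its second implication it therefore suffices to show that $\mathcal{SG}(p+1) \neq \frac{p-1}{2}$; the desired conclusion $\mathcal{SG}(2p) = \frac{p-1}{2}$ then follows at once.

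To establish this inequality I would feed the congruence hypothesis into the earlier bound stating that $3 \mid n$ forces $\mathcal{SG}(2n) \leq n-2$. First observe that $p \equiv 5 \pmod 6$ gives $p+1 \equiv 0 \pmod 6$, so $6 \mid p+1$. Writing $m = \frac{p+1}{2}$, the divisibility $6 \mid 2m$ together with $\gcd(2,3)=1$ yields $3 \mid m$. Applying the earlier lemma with $n = m$ then gives
\[
\mathcal{SG}(p+1) = \mathcal{SG}(2m) \leq m - 2 = \frac{p+1}{2} - 2 = \frac{p-3}{2} < \frac{p-1}{2}.
\]
In particular $\mathcal{SG}(p+1) \neq \frac{p-1}{2}$, which is exactly the hypothesis needed to invoke the second implication of the preceding lemma.

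The argument is thus a short two-step chain rather than one with a genuine obstacle: the condition $p \equiv 5 \pmod 6$ is engineered precisely so that $\frac{p+1}{2}$ is divisible by $3$, which is the one arithmetic input that the bound $\mathcal{SG}(2n) \leq n-2$ requires. The only point deserving any care is confirming that the reduction lemma is applicable, i.e.\ that $p \geq 5$; but this holds automatically for every prime in the residue class $5 \bmod 6$, so no separate small-case analysis is needed.
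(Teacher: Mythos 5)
Your proposal is correct and follows essentially the same route as the paper: both reduce the theorem, via the preceding dichotomy lemma, to showing $\mathcal{SG}(p+1)\neq\frac{p-1}{2}$, and both get this from $6\mid p+1$ forcing the bound $\mathcal{SG}(p+1)\leq\frac{p-3}{2}$. The only (cosmetic) difference is that you obtain this bound by citing the earlier lemma ($3\mid n \implies \mathcal{SG}(2n)\leq n-2$ with $n=\frac{p+1}{2}$), whereas the paper re-derives the same estimate inline from the largest-option argument; your version is marginally more modular but mathematically identical.
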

\begin{proof}
Let $p$ be prime where $p \equiv 5 \bmod 6$.  We claim that $\mathcal{SG}(p+1)\neq\frac{p-1}{2}$, and so the previous lemma implies that $\mathcal{SG}(2p)=\frac{p-1}{2}$.

Indeed, since $p \equiv 5\bmod 6$, we have $p+1\equiv 0\bmod 6$.  In particular, $1,2,3\mid p+1$, so the largest possible option of $p+1$ is $(p+1)-4=p-3$.  So by Lemma \ref{lem-Sal-prev results}, for all $y\in \opt(p+1)$,  $\mathcal{SG}(y)<\frac{y}{2}\leq \frac{p-3}{2}$, that is $\mathcal{SG}(y)\leq \frac{p-5}{2}$.  Hence $\mathcal{SG}(p+1)\leq \frac{p-3}{2}$.
\end{proof}

\begin{cor}
There are infinitely many $n \in \mathbb{N}$ such that $\mathcal{SG}(n) = \frac{n-2}{4}$.
\end{cor}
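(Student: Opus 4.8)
The plan is to exhibit an explicit infinite family realizing the lower bound from Corollary~\ref{lem-Sal-first lb for even}, using the theorem just proved. First I would take $n = 2p$ where $p$ is a prime with $p \equiv 5 \bmod 6$. The preceding theorem gives $\mathcal{SG}(2p) = \frac{p-1}{2}$, and since $\frac{n-2}{4} = \frac{2p-2}{4} = \frac{p-1}{2}$, every such $n$ satisfies $\mathcal{SG}(n) = \frac{n-2}{4}$ exactly.

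It therefore remains only to show that there are infinitely many primes $p \equiv 5 \bmod 6$; this produces infinitely many distinct values $n = 2p$ and completes the argument. The quickest route is to cite Dirichlet's theorem on primes in arithmetic progressions, which applies since $\gcd(5,6) = 1$. If one prefers to keep the paper self-contained, I would instead run an elementary Euclid-style argument: assuming $p_1, \dots, p_k$ were all such primes, the number $N = 6 p_1 \cdots p_k - 1$ satisfies $N \equiv 5 \bmod 6$, is coprime to $6$, and is not divisible by any $p_i$; since a product of primes each congruent to $1 \bmod 6$ is itself $\equiv 1 \bmod 6$, some prime factor of $N$ must be $\equiv 5 \bmod 6$, yielding a prime outside the list and the desired contradiction.

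There is essentially no obstacle here: the number-theoretic work is entirely carried by the theorem on $\mathcal{SG}(2p)$, and the only remaining ingredient is the standard fact that the residue class $5 \bmod 6$ contains infinitely many primes. The one point worth stating carefully is the identity $\frac{n-2}{4} = \frac{p-1}{2}$, which simultaneously confirms that $n = 2p$ has the right shape (so $n \equiv 2 \bmod 4$ and $\frac{n-2}{4}$ is a genuine nonnegative integer) and that the computed value matches the target.
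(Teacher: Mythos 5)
Your proposal is correct and follows the same route as the paper: take $n=2p$ for primes $p\equiv 5 \bmod 6$, apply the preceding theorem to get $\mathcal{SG}(2p)=\frac{p-1}{2}=\frac{n-2}{4}$, and invoke the infinitude of such primes (which the paper simply cites as well known, whereas you also supply a valid Euclid-style argument). No gaps; the extra care about the identity $\frac{n-2}{4}=\frac{p-1}{2}$ and the elementary proof of infinitude are fine but not needed beyond what the paper does.
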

\begin{proof}
It is well known that there are infinitely many primes $p = 5\bmod  6$. For each of these $p$, letting $n=2p$, we have $\mathcal{SG}(n) = \mathcal{SG}(2p) = \frac{p-1}{2}=\frac{n-2}{4}$.
\end{proof}

It is possible to keep refining this inquiry about numbers which are twice an odd.  For example Corollary \ref{cor-sal-double a prime} could be extended for more than $2$ odd prime factors, but we don't see how helpful it is.  Instead, we investigate the remaining cases by decomposing even numbers as an odd number times a power of $2$.
As a first step, we can compute exact nim-values in the case that the odd part is $1$, $3$, $5$, or $9$.

\begin{lem}\label{lem-smallcases}
Let $b\geq 1$. Then $\mathcal{SG}((2a+1)2^b) = (2a+1)2^{b-1} - a-1$ for $a = 0,1,2,4$.
\end{lem}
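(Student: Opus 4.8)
The plan is to pin down the single value $M-(a+1)$, where $M=n/2=(2a+1)2^{b-1}$, by squeezing $\mathcal{SG}(n)$ between matching upper and lower bounds; pleasantly, no induction on $b$ is needed. For the lower bound I would invoke Lemma~\ref{lemma-sal-divisibility}: writing $\mathcal{SG}(n)=\mathcal{SG}(2M)=M-k$, we must have $2k-1\mid M$. Since the odd divisors of $M=(2a+1)2^{b-1}$ are exactly the divisors of $2a+1$, this forces $2k-1\le 2a+1$, i.e.\ $k\le a+1$, and hence $\mathcal{SG}(n)\ge M-(a+1)$.

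For the upper bound I would show that $M-(a+1)$ never occurs as the nim-value of an option of $n$; since the nim-value is the mex of the option values, this yields $\mathcal{SG}(n)\le M-(a+1)$, and the two bounds then coincide. I would sort the options by parity. An odd option $n-j$ (with $j$ odd) has $\mathcal{SG}(n-j)=\frac{n-j-1}{2}=M-\frac{j+1}{2}$ by Lemma~\ref{lem-Sal-prev results}, and this equals $M-(a+1)$ only when $j=2a+1$; but $2a+1\mid n$, so $n-(2a+1)$ subtracts a divisor and is not a legal option. An even option has the form $2(M-k')$ with $k'\nmid M$, and by Lemma~\ref{lem-Sal-prev results} its nim-value is strictly below $M-k'$; when $k'\ge a+1$ this already gives $\mathcal{SG}(2(M-k'))<M-(a+1)$, so such options cannot realize the target.

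The crux — and the only place any real work remains — is the handful of even options with $1\le k'\le a$ and $k'\nmid M$, for which the crude bound $\mathcal{SG}<M-k'$ is too weak. A short divisor count shows these are genuinely rare: for $k'\le a\le 4$ one needs $k'\nmid(2a+1)2^{b-1}$, which happens only for the four triples $(a,b,k')\in\{(2,1,2),(4,1,2),(4,1,4),(4,2,4)\}$, corresponding to $n\in\{10,18,36\}$. In each of these I would simply read off the relevant option nim-value — namely $\mathcal{SG}(6)=1$, $\mathcal{SG}(14)=6$, $\mathcal{SG}(10)=2$, and $\mathcal{SG}(28)=10$ — and verify that none equals the respective target $M-(a+1)\in\{2,4,4,13\}$. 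With these finitely many exceptions dispatched, $M-(a+1)$ is absent from every option of $n$, completing the upper bound.

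I expect the main obstacle to be exactly this last step: the obstructing even options $2(M-k')$ have odd part outside $\{1,3,5,9\}$ (e.g.\ $28=4\cdot 7$), so their values are not supplied by the lemma being proved and must be computed by hand. Everything else follows mechanically from Lemma~\ref{lemma-sal-divisibility} and the bound $\mathcal{SG}(x)<x/2$, so the whole argument reduces to a number-theoretic lower bound, a parity-sorted upper bound, and a finite residue of explicit checks.
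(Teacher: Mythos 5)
Your proof is correct, and it takes a genuinely different route from the paper's. The shared core is the upper bound: both arguments show that no option of $n=(2a+1)2^b$ has nim-value $M-(a+1)$, where $M=(2a+1)2^{b-1}$ (the unique odd candidate $n-(2a+1)$ is illegal since $2a+1\mid n$, and even options are controlled by $\mathcal{SG}(x)<x/2$). The difference is in the matching lower bound. The paper gets it by enumeration: the odd options $1,3,\ldots,n-(2a+3)$ realize every nim-value $0,1,\ldots,M-a-2$, so the mex is exactly $M-(a+1)$; to make the even-option argument uniform it restricts to $b\geq 3$ and checks all eight positions with $b\in\{1,2\}$ by hand. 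You instead invoke Lemma~\ref{lemma-sal-divisibility}: writing $\mathcal{SG}(n)=M-k$ forces $2k-1$ to divide the odd part $2a+1$, whence $k\leq a+1$ and $\mathcal{SG}(n)\geq M-(a+1)$. This is precisely the mechanism that later drives Theorem~\ref{sal-thm-big-one}, so your argument is in effect that theorem's proof specialized to odd parts $1,3,5,9$, plus a verification that the smallest admissible value is the one attained. What your route buys: the hand-checking shrinks from eight base-case positions to exactly four exceptional even options (your triples $(a,b,k')$ with $k'\leq a$ and $k'\nmid M$, requiring $\mathcal{SG}(6)$, $\mathcal{SG}(10)$, $\mathcal{SG}(14)$, $\mathcal{SG}(28)$), and the analysis makes visible exactly why the hypothesis $2a+1\in\{1,3,5,9\}$ matters (once $b\geq 3$, every $k'\leq a$ divides $M$, so no exceptions remain). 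What the paper's route buys: it stays entirely at the level of the mex definition, never needing Lemma~\ref{lemma-sal-divisibility}, and it explicitly exhibits the options attaining all smaller values. One housekeeping remark: since $6$ and $10$ are themselves instances of the lemma ($a=1,2$ with $b=1$), you should either compute $\mathcal{SG}(6)=1$ and $\mathcal{SG}(10)=2$ directly (trivial, and consistent with the paper's table) or process the cases in increasing order of $n$ so no circularity arises; $\mathcal{SG}(14)=6$ and $\mathcal{SG}(28)=10$ genuinely require direct computation, since their odd parts lie outside $\{1,3,5,9\}$, as you correctly note.
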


\begin{proof}
This can be checked by hand for the cases when $b=1$ or $b=2$, so let $b\geq 3$, and consider the options of $(2a+1)2^b$

All odd numbers greater than $(2a+1)$ are non-divisors of $(2a+1)2^b$, so the odd numbers $1,3,\ldots, (2a+1)2^b - (2a+3)$ are all options with nim-values $0,1,\ldots, (2a+1)2^{b-1}-a-2$, respectively. 

We claim that there is no option with nim-value $(2a+1)2^{b-1}-a-1$. Indeed $(2a+1)2^b - 2a - 1$ is not an option, and is the only odd number with nim-value $(2a+1)2^{b-1}-a-1$. Next, note that $b \geq 3$ and $a = 0,1,2,4$ i.e. $(2a+1) = 1, 3, 5, 9$. Hence all even numbers less than $(2a+1)$ divide $(2a+1)2^b$ and the only even options are less than or equal to $(2a+1)2^b-2a-2$. By Lemma \ref{lem-Sal-prev results}, their nim-values are less than $\frac{(2a+1)2^b-2a-2}{2} = (2a+1)2^{b-1}-a-1$. Thus, there is no option with nim-value $(2a-1)2^{b-1}-a-1$, and $\mathcal{SG}((2a-1)2^b) = (2a-1)2^{b-1}-a-1$. 
\end{proof}

We now see that there are infinitely many \emph{even} values for which our uniform upper bound is obtained:

\begin{cor}\label{sal-cor-powersof2}
Let $b\geq 1$. Then $\mathcal{SG}(2^b) = 2^{b-1} - 1$.  In particular, there are infinitely many $m$ for which $\mathcal{SG}(n)=\frac{n-2}{2}$.
\end{cor}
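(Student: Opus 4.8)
The plan is to recognize that both assertions follow directly from the preceding Lemma \ref{lem-smallcases}, so no new machinery is required. For the first claim, I would simply specialize that lemma to the case $a = 0$. Then $2a+1 = 1$, so the position $(2a+1)2^b$ is exactly $2^b$, and the formula $(2a+1)2^{b-1} - a - 1$ collapses to $2^{b-1} - 1$. This immediately yields $\mathcal{SG}(2^b) = 2^{b-1} - 1$ for every $b \geq 1$.

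For the second claim, I would substitute $n = 2^b$ into the target expression $\frac{n-2}{2}$, obtaining $\frac{2^b - 2}{2} = 2^{b-1} - 1$, which agrees with the value of $\mathcal{SG}(2^b)$ just computed. Since the powers of $2$ form an infinite family, this produces infinitely many even positions $n$ with $\mathcal{SG}(n) = \frac{n-2}{2}$, as desired. (I read the stray $m$ in the statement as a typo for $n$.)

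There is no genuine obstacle here, since the corollary is a direct specialization; the only point worth emphasizing is the contrast with the uniform bound of Lemma \ref{lem-Sal-prev results}, which asserts $\mathcal{SG}(n) < \frac{n}{2}$. For even $n$ the largest integer strictly below $\frac{n}{2}$ is precisely $\frac{n-2}{2}$, so these positions saturate the upper bound. This is what gives the corollary its interest: whereas the earlier construction via primes $p \equiv 5 \bmod 6$ exhibited infinitely many $n$ meeting the lower bound $\frac{n-2}{4}$, the powers of $2$ now exhibit infinitely many even $n$ meeting the upper bound instead.
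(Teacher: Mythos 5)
Your proposal is correct and matches the paper's approach exactly: the corollary is just Lemma \ref{lem-smallcases} specialized to $a=0$, with the observation that $n=2^b$ gives $\frac{n-2}{2}=2^{b-1}-1$ (and yes, the $m$ in the statement is a typo for $n$). The paper supplies no separate proof, treating this as an immediate consequence, precisely as you do.
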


Note that the above proof does not work, for example, when $a = 3$ i.e. $ 2a+1=7$, since $6 < 2a + 1$, and $6 \nmid 7(2^b)$. In fact $\mathcal{SG}(14) = 6$, not $(2a+1)2^{b-1} -a-1=3$.  Next we obtain a slightly weaker result when $a=10$ and $ 2a+1=21$.

\begin{lem}\label{sal-21case}
Let $b\geq 1$. Then $\mathcal{SG}(21  (2^b)) = 21  (2^{b-1}) - 11$ or $21 (2^{b-1}) - 4$.
\end{lem}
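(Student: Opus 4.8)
The plan is to combine the divisibility constraint of Lemma~\ref{lemma-sal-divisibility} with the uniform upper bound of Lemma~\ref{lem-Sal-prev results}. Write $N = 21\cdot 2^{b} = 2n$ with $n = 21\cdot 2^{b-1}$, and set $k = n - \mathcal{SG}(N)$, so that $k \geq 1$ because $\mathcal{SG}(N) < N/2 = n$. By Lemma~\ref{lemma-sal-divisibility}, $2k-1 \mid n = 3\cdot 7\cdot 2^{b-1}$; since $2k-1$ is odd, it must divide $21$, forcing $2k-1 \in \{1,3,7,21\}$ and hence $\mathcal{SG}(N) = n - k \in \{n-1,\,n-2,\,n-4,\,n-11\}$. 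Thus it suffices to rule out the two largest candidates $n-1$ and $n-2$, i.e.\ to prove $\mathcal{SG}(N) \leq n-4$.

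Since $\mathcal{SG}(N)$ is the $\mex$ of the nim-values of the options of $N$, it is enough to show that $n-4$ is \emph{not} the nim-value of any option of $N$: then $n-4$ is a missing value, so $\mathcal{SG}(N)\leq n-4$. I would verify this separately for odd and even options. For an odd option $x = N - k'$ (with $k'$ odd and $k' \nmid N$), Lemma~\ref{lem-Sal-prev results} gives $\mathcal{SG}(x) = \frac{N-k'-1}{2}$, and this equals $n-4 = \frac{N-8}{2}$ exactly when $k' = 7$. But $7 \mid 21 \mid N$, so $k'=7$ is not a legal move and no odd option attains $n-4$. (Indeed this is the mechanism behind the four candidate values: the excluded odd divisors $1,3,7,21$ of $N$ are precisely the ones whose corresponding odd options would carry nim-values $n-1,n-2,n-4,n-11$.)

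For an even option $x = N - k'$, the bound $\mathcal{SG}(x) < x/2$ from Lemma~\ref{lem-Sal-prev results} shows that $\mathcal{SG}(x) = n-4$ can only occur if $x > 2n - 8 = N-8$, i.e.\ if $k' \in \{2,4,6\}$. I expect this even-option case to be the crux. When $b \geq 2$ we have $4 \mid N$, and since $2\mid N$ and $6 = 2\cdot 3 \mid N$ as well, all three of $2,4,6$ are divisors of $N$ and hence none is a legal move; so no even option attains $n-4$, and we conclude $\mathcal{SG}(N)\leq n-4$. The remaining case $b=1$, where $N = 42$ and $4 \nmid 42$, must be handled by hand: here the only candidate even option is $x = 38$, and one checks directly (cf.\ the table above) that $\mathcal{SG}(38) = 9 \neq 17 = n-4$, so again $n-4$ is unattained. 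Combining the two cases with the candidate list gives $\mathcal{SG}(21\cdot 2^b) \in \{n-11,\,n-4\} = \{21\cdot 2^{b-1}-11,\ 21\cdot 2^{b-1}-4\}$, as claimed.
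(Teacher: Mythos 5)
Your proof is correct, but it is organized differently from the paper's. The paper argues by direct mex enumeration: for $b\geq 2$ it lists the odd options and observes that their nim-values cover $0,1,\ldots,21(2^{b-1})-12$ and $21(2^{b-1})-10,\ldots,21(2^{b-1})-5$, with gaps exactly at $21(2^{b-1})-11$ and $21(2^{b-1})-4$ (because $21(2^b)-21$ and $21(2^b)-7$ are not options), then caps all even options below $21(2^{b-1})-4$ using $2,4,6\mid 21(2^b)$, exactly as you do; the case $b=1$ is settled by reading $\mathcal{SG}(42)=17$ from the table. You instead invoke Lemma \ref{lemma-sal-divisibility} to cut the possibilities down to the four candidates $n-1$, $n-2$, $n-4$, $n-11$ (where $n=21(2^{b-1})$, the candidates corresponding to the odd divisors $1,3,7,21$), and then eliminate the top two by showing that $n-4$ is not the nim-value of any option, so the mex is at most $n-4$. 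This is a genuinely different organization: it is precisely the strategy the paper uses later to prove the more general Theorem \ref{sal-thm-big-one}, of which your candidate list is the special case $2a+1=21$, with your cap argument supplying the extra sharpening needed for this lemma. What the paper's enumeration buys in exchange is slightly finer information: it makes explicit that which of the two values occurs is decided by whether some even option attains nim-value $21(2^{b-1})-11$. Your handling of $b=1$ is also a bit lighter, since rather than computing $\mathcal{SG}(42)$ outright you only need $\mathcal{SG}(38)=9\neq 17$ to keep the cap argument alive when $4\nmid 42$.
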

\begin{proof}
In the case $b = 1$, we see $\mathcal{SG}(42) = 17$. For $b \geq 2$, consider the options of $21  (2^b)$.  The odd numbers $1,3,\ldots, 21(2^b) - 23$ and $ 21(2^b) - 19, \ldots, 21(2^b) - 9$ are all options with nim-values $0,1,\ldots, 21(2^{b-1})-12$ and $ 21(2^{b-1})-10, \ldots, 21(2^{b-1})-5$, respectively. The numbers  $21(2^b) - 21$ and $21(2^b) - 7$ with nim-values $21(2^{b-1}) - 11$ and $21(2^{b-1}) - 4$ are not options, and all larger odd numbers have nim-values greater than $21(2^{b-1}) - 4$.

On the other hand, since $2,4,6\mid 21 (2^b)$, Lemma \ref{lem-Sal-prev results} implies that any even options have nim-values less than $\frac{21(2^b)-8}{2} = 21(2^{b-1}) - 4$.  Hence $\mathcal{SG}(21  (2^b)) = 21 (2^{b-1}) - 11$ or $21 (2^{b-1}) - 4$.
\end{proof}

We end this section by showing that twice a Mersenne number is above the uniform lower bound.  Note that if $m=2n=2(2^b-1)$ then $\frac{m-2}{4}=\frac{n-1}{2}=2^{b-1}-1$.

\begin{lem}
Let $b\geq 3$.  Then $\mathcal{SG}(2(2^b-1))>2^{b-1}-1$.  In particular, if $2^b-1$ is prime, then $\mathcal{SG}(2(2^b-1))=2^b-2$.
\end{lem}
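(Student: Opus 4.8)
The plan is to combine the uniform lower bound, which already forces $\mathcal{SG}(2(2^b-1)) \geq 2^{b-1}-1$, with a proof that this value cannot actually be achieved, so that the inequality must be strict. For the lower bound, Lemma \ref{lem-Sal-recursive-lower-bound} gives $\mathcal{SG}(2(2^b-1)) \geq \mathcal{SG}(2^b-1)$, and since $2^b-1$ is odd, Lemma \ref{lem-Sal-prev results} evaluates $\mathcal{SG}(2^b-1) = \frac{2^b-2}{2} = 2^{b-1}-1$. (Corollary \ref{lem-Sal-first lb for even} would serve equally well here.) Thus it suffices to rule out $\mathcal{SG}(2(2^b-1)) = 2^{b-1}-1$. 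The cleanest way to forbid a value $v$ from being a nim-value is to exhibit an \emph{option} whose nim-value is exactly $v$, since then the $\mex$ must skip $v$.

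So first I would hunt for an option of $2(2^b-1)$ of nim-value $2^{b-1}-1$. The odd options are of no use: by Lemma \ref{lem-Sal-prev results} the only odd number with nim-value $2^{b-1}-1$ is $2^b-1$ itself, and $2^b-1$ divides $2(2^b-1)$, so it is not an option. The key observation is that Corollary \ref{sal-cor-powersof2} supplies an \emph{even} number realizing precisely this value, namely $\mathcal{SG}(2^b) = 2^{b-1}-1$.

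The one substantive step is then to verify that $2^b$ is genuinely an option of $2(2^b-1)$, i.e. that the jump $2(2^b-1)-2^b = 2^b-2$ is a non-divisor of $2^{b+1}-2$. Writing $2^b-2 = 2(2^{b-1}-1)$ and $2^{b+1}-2 = 2(2^b-1)$, this reduces to asking whether $2^{b-1}-1 \mid 2^b-1$; since $2^b-1 = 2(2^{b-1}-1)+1$, the divisibility holds only when $2^{b-1}-1 = 1$, that is $b=2$. Hence for $b \geq 3$ the number $2^b$ is an option, its nim-value $2^{b-1}-1$ appears among the options, and so $\mathcal{SG}(2(2^b-1)) \neq 2^{b-1}-1$, which together with the lower bound yields the strict inequality. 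I do not expect a real obstacle here; the divisibility check is routine, and the only subtlety is the idea itself, namely noticing that it is the power of two $2^b$, rather than any odd number, that carries the forbidden value.

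For the ``in particular'' clause, suppose $2^b-1 = p$ is prime, so $p \geq 7$ is an odd prime. Then Corollary \ref{cor-sal-double a prime} confines $\mathcal{SG}(2p)$ to $\{\,p-1,\ \tfrac{p-1}{2}\,\}$. Since $\tfrac{p-1}{2} = 2^{b-1}-1$ has just been excluded, the only remaining possibility is $\mathcal{SG}(2(2^b-1)) = p-1 = 2^b-2$, as claimed.
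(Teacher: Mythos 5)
Your proposal is correct and follows essentially the same route as the paper: both identify $2^b$ (via Corollary \ref{sal-cor-powersof2}) as an option of $2(2^b-1)$ carrying the forbidden nim-value $2^{b-1}-1$, verify that $2^b-2$ is a non-divisor of $2(2^b-1)$ by an elementary coprimality/divisibility check, and then invoke Corollary \ref{cor-sal-double a prime} for the Mersenne-prime case. The only difference is presentational: you spell out the lower bound $\mathcal{SG}(2(2^b-1))\geq 2^{b-1}-1$ explicitly, which the paper leaves implicit in the remark preceding the lemma.
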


\begin{proof}
By Corollary \ref{sal-cor-powersof2}, $\mathcal{SG}(2^b)=2^{b-1}-1$, so we just need to show that $2^b\in\opt(2(2^b-1))$.

Suppose otherwise and that $2(2^b-1)-2^b\mid 2(2^b-1)$.  Then $2^{b}-2\mid 2(2^b-1)$.  Thus either $2^{b}-2$ and $2^b-1$ share a common factor and so $2^{b}-2=1$, or $2^{b}-2\mid 2$ and so $b\leq 2$.  Both cases are impossible.

In the case $2^b-1$ is prime, Corollary  \ref{cor-sal-double a prime} implies $\mathcal{SG}(2(2^b-1))=2^b-2$.
\end{proof}

\section{The Fundamental Theorem of \textsc{saliquant} and density of values}

Finally, we obtain our most general statement about nim-values of Saliquant.  The two corollaries which follow were actually proved first, inspired by the proof of Corollary \ref{lem-Sal-first lb for even}.  
\begin{thm}\label{sal-thm-big-one} For all $a\geq 0,b \geq 1$, 
\begin{align*}
\mathcal{SG}\left((2a+1)2^b\right)
&=\frac{m}{2m+1}\left((2a+1)2^b-1\right)+\frac{1}{2m+1}\left((2a+1)2^{b-1}-a-1\right)\\
&=(2a+1)2^{b-1}-\frac{1}{2}\left(\frac{2a+1}{2m+1}+1\right)
\end{align*}
for some non negative integer $m$. Thus
\begin{align*}
\mathcal{SG}\left((2a+1)2^b\right)&=(2a+1)2^{b-1}-\frac{d+1}{2}, \text{ where $d$ is a factor of $2a+1$}. 
\end{align*}

\end{thm}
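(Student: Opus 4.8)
The plan is to read both displayed formulas directly off the divisibility constraint of Lemma \ref{lemma-sal-divisibility}, leaving only a routine algebraic identity to verify. Write $N = (2a+1)2^{b-1}$, so the position in question is $2N$. First I would invoke the uniform upper bound of Lemma \ref{lem-Sal-prev results}: since $\mathcal{SG}(2N) < \frac{2N}{2} = N$, we may write $\mathcal{SG}(2N) = N - k$ for some integer $k$, and the strictness of the inequality forces $k \geq 1$. This is the only place the upper bound enters, and it is precisely what guarantees that $2k-1$ is a genuine positive integer.

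Next I would apply Lemma \ref{lemma-sal-divisibility} with $n = N$: from $\mathcal{SG}(2N) = N - k$ we obtain $2k - 1 \mid N = (2a+1)2^{b-1}$. The one number-theoretic observation is that $2k-1$ is odd, hence coprime to $2^{b-1}$, so in fact $2k - 1 \mid 2a+1$. Setting $d = 2k - 1$, this says exactly that $d$ is a factor of $2a+1$ (automatically odd, since $2a+1$ is), and $k = \frac{d+1}{2}$ is an integer. This already yields the final line of the theorem, $\mathcal{SG}((2a+1)2^b) = (2a+1)2^{b-1} - \frac{d+1}{2}$.

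To recover the middle expression, I would introduce the complementary cofactor $2m+1 = \frac{2a+1}{d}$, which is a nonnegative-indexed odd number, so $m \geq 0$ and $d = \frac{2a+1}{2m+1}$. Substituting gives $\mathcal{SG}(2N) = (2a+1)2^{b-1} - \frac{1}{2}\left(\frac{2a+1}{2m+1} + 1\right)$, the second line. The first line is then pure rewriting: clearing the denominator $2m+1$ and using $a+1 = \frac{(2a+1)+1}{2}$, one checks that both $\frac{m}{2m+1}\left((2a+1)2^b - 1\right) + \frac{1}{2m+1}\left((2a+1)2^{b-1} - a - 1\right)$ and $(2a+1)2^{b-1} - \frac{1}{2}\left(\frac{2a+1}{2m+1}+1\right)$ collapse to the common quantity $(2a+1)2^{b-1} - \frac{2m + (2a+1) + 1}{2(2m+1)}$.

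I do not expect a genuine obstacle here, since Lemma \ref{lemma-sal-divisibility} does all the heavy lifting; the argument is essentially a reparametrization. The one point warranting care is the very existence of the form $N - k$ with $k$ a positive integer, which rests entirely on the strict bound $\mathcal{SG}(2N) < N$ — I would state this explicitly so that $d = 2k-1$ cannot degenerate. As a sanity check, the case $a = 0$ forces $d = 1$ and recovers $\mathcal{SG}(2^b) = 2^{b-1} - 1$, agreeing with Corollary \ref{sal-cor-powersof2}.
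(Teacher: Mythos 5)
Your proof is correct and follows essentially the same route as the paper's: both rest entirely on Lemma \ref{lemma-sal-divisibility} applied to $2N$ with $N=(2a+1)2^{b-1}$, followed by the observation that the odd divisor $2k-1$ must divide the odd part $2a+1$ (the paper phrases this by extracting $2^{b-1}$ from the cofactor $Q_1$ and writing the odd quotient as $2m+1$, which is exactly your $d$ and $\frac{2a+1}{d}$ parametrization). Your explicit appeal to the strict bound $\mathcal{SG}(2N)<N$ to guarantee $k\geq 1$ is a point the paper leaves implicit, but it is a matter of care, not a different argument.
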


This theorem unifies several edge cases, as well.  If we set $a=0$, then we must have $d=1$, obtaining Corollary \ref{sal-cor-powersof2}.  Let $f(a,b,m)$ be the function given by Theorem \ref{sal-thm-big-one}. If we set $b=0$, then $f(a,b,m)$ is never an integer, but $\lim_{m\to\infty} f(a,b,m)=\frac{n-1}{2}$, matching Lemma \ref{lem-Sal-prev results}.

Fixing $a$ and $b$, $f(a,b,m)$ is a linear rational function in $m$, thus monotonic for $m\geq 0$, and it is easily checked that it is increasing.  Hence its minimum is obtained when $m=0$, with an upper bound given by $m\to\infty$.
Thus we have the following corollary, which itself is a generalization of Lemma \ref{lem-smallcases}. 

\begin{cor}\label{sal-cor-big1}
For all $a,b \geq 1$, 
$$\frac{(2a+1)2^b}{2}-a-1\leq \mathcal{SG}\left((2a+1)2^b\right)< \frac{(2a+1)2^b}{2}-\frac{1}{2}.$$
\end{cor}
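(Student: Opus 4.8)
The plan is to treat this purely as an elementary optimization of the single-variable rational function produced by the Fundamental Theorem. By Theorem \ref{sal-thm-big-one}, there is a non-negative integer $m$ with $\mathcal{SG}\left((2a+1)2^b\right)=f(a,b,m)$, where
$$f(a,b,m)=(2a+1)2^{b-1}-\frac{1}{2}\left(\frac{2a+1}{2m+1}+1\right).$$
So it suffices to bound $f(a,b,m)$ over all integers $m\geq 0$, with $a,b$ fixed. First I would rewrite $f$ to isolate its dependence on $m$ as
$$f(a,b,m)=(2a+1)2^{b-1}-\frac{1}{2}-\frac{1}{2}\cdot\frac{2a+1}{2m+1},$$
which displays $f$ as a constant minus a positive multiple of $\frac{1}{2m+1}$.

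For the lower bound, I would evaluate at $m=0$, giving $f(a,b,0)=(2a+1)2^{b-1}-(a+1)=\frac{(2a+1)2^b}{2}-a-1$. Since $a\geq 1$ forces $2a+1>0$, the term $\frac{2a+1}{2m+1}$ is strictly positive and strictly decreasing in $m$, so $f(a,b,m)$ is strictly increasing in $m$; hence its minimum over $m\geq 0$ is attained at $m=0$, establishing the left inequality. For the upper bound I would take $m\to\infty$, where $\frac{2a+1}{2m+1}\to 0$ and thus $f(a,b,m)\to(2a+1)2^{b-1}-\frac{1}{2}=\frac{(2a+1)2^b}{2}-\frac{1}{2}$.

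The one point requiring care—and the only real obstacle, since the substantive work lives entirely in Theorem \ref{sal-thm-big-one}—is that this limiting value must be a \emph{strict} upper bound. Because $f$ is strictly increasing and $\frac{2a+1}{2m+1}$ remains strictly positive for every finite non-negative integer $m$, the supremum $\frac{(2a+1)2^b}{2}-\frac{1}{2}$ is approached but never attained. I would make this explicit by noting $\frac{2a+1}{2m+1}>0$ for all $m\geq 0$, so $f(a,b,m)<(2a+1)2^{b-1}-\frac{1}{2}$ holds with strict inequality, completing the right-hand bound and the corollary.
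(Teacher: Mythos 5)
Your proof is correct and follows essentially the same route as the paper: the paper also derives this corollary from Theorem \ref{sal-thm-big-one} by observing that $f(a,b,m)$ is increasing in $m$, so its minimum is at $m=0$ and its supremum (never attained) is the limit as $m\to\infty$. Your explicit treatment of the strictness of the upper bound is a nice touch that the paper leaves implicit.
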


The upper bound is the same as in Lemma \ref{lem-Sal-prev results}.  If we fix $a$ and let $b$ grow large, the lower bound is an asymptotic improvement over Corollary \ref{lem-Sal-first lb for even} from $\mathcal{O}(\frac{n}{4})$ to $\mathcal{O}(\frac{n}{2})$.  Furthermore, we will see experimentally below that all values of $f(a,b,m)$ are obtained. To illustrate the theorem, set $b=1$ to obtain all possible nim-values of even numbers which are not multiples of $4$:

\begin{cor}\label{sal-cor-4a+2}
For all $a\geq 1$, $\mathcal{SG}(4a+2)$ must have the form $$\frac{(4m+1)a+m}{2m+1} \quad\left(=a, \frac{5a+1}{3}, \frac{9a+2}{5}, \frac{13a+3}{7},\frac{17a+4}{9}, \frac{21a+5}{11},\ldots\right)$$
for some $m\geq 0$.
\end{cor}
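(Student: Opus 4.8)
The plan is to obtain this corollary as a direct specialization of Theorem \ref{sal-thm-big-one} to the case $b=1$, so no new game-theoretic input is required and the entire argument is algebraic bookkeeping. Setting $b=1$ gives $(2a+1)2^b = 4a+2$ and $(2a+1)2^{b-1} = 2a+1$, so the second displayed formula of Theorem \ref{sal-thm-big-one} reads
$$\mathcal{SG}(4a+2) = (2a+1) - \frac{1}{2}\left(\frac{2a+1}{2m+1}+1\right)$$
for some nonnegative integer $m$. First I would clear the factor $\tfrac{1}{2}$ and put everything over the common denominator $2(2m+1)$.

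The key step is then a short numerator simplification. Collecting terms over $2(2m+1)$ gives
$$\mathcal{SG}(4a+2) = \frac{2(2a+1)(2m+1) - (2a+1) - (2m+1)}{2(2m+1)}.$$
Expanding, $2(2a+1)(2m+1) = 8am + 4a + 4m + 2$, and subtracting $(2a+1)$ and $(2m+1)$ leaves $8am + 2a + 2m = 2(4am + a + m)$. Cancelling the common factor of $2$ against the denominator yields exactly
$$\mathcal{SG}(4a+2) = \frac{4am + a + m}{2m+1} = \frac{(4m+1)a + m}{2m+1},$$
as claimed, with the quantifier ``for some $m \geq 0$'' carried over unchanged from the theorem.

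Finally, to reproduce the explicit list I would substitute $m = 0, 1, 2, \ldots$ in turn, obtaining $a,\ \tfrac{5a+1}{3},\ \tfrac{9a+2}{5},\ \tfrac{13a+3}{7},\ \ldots$; this also doubles as a sanity check on the algebra. There is no genuine obstacle here — the only place demanding care is the sign bookkeeping in the numerator, where the $+2$ produced by $2(2a+1)(2m+1)$ together with the $-1$ from $-(2a+1)$ and the $-1$ from $-(2m+1)$ must cancel cleanly so that the factor of $2$ can be pulled out and removed.
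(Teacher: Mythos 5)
Your proposal is correct and is essentially the paper's own argument: the corollary is stated there as an immediate specialization of Theorem \ref{sal-thm-big-one} to $b=1$, and your numerator computation just fills in the algebra the paper leaves implicit. As a minor shortcut, substituting $b=1$ into the theorem's \emph{first} displayed expression gives $\frac{m}{2m+1}(4a+1)+\frac{a}{2m+1}=\frac{(4m+1)a+m}{2m+1}$ in a single step, with no denominator-clearing required.
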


\begin{proof}[Proof of Theorem \ref{sal-thm-big-one}]

Suppose $a,b \geq 1$. Let $X=\mathcal{SG}\left((2a+1)2^b\right)$. Then $$X = ((2a+1)2^{b-1}) - ((2a+1)2^{b-1} - X),$$ so by Lemma \ref{lemma-sal-divisibility}, we have 
$$\left(2\left((2a+1)2^{b-1}-X\right) - 1\right) \mid (2a+1)2^{b-1}.$$                                                                                                                                                                                                                                      
Thus there is some $Q_1$ so that 
$$Q_1(a2^{b+1}+2^b-2X-1) = (2a+1)2^{b-1}.$$
Since $(a2^{b+1}+2^b-2X-1)$ is odd, $2^{b-1}\mid Q_1$.  Pick $Q_2$ so that $Q_2 2^{b-1}=Q_1$.  This gives
$$Q_2 (a2^{b+1}+2^b-2X-1)= 2a+1.$$
Next since $Q_2$ is odd, we can set $Q_2=2m+1$ for some $m\geq 0$, giving
$$(2m+1) (a2^{b+1}+2^b-2X-1)= 2a+1.$$
Finally, solving for $X$ gives the desired result.
\end{proof}

Now that we know the specific possible values $\mathcal{SG}(n)$ can take based on the decomposition $n=(2a+1)2^b$, a natural question is how these values are distributed. 
For a given $b>0$, $m\geq 0$, define $$S_{b,m}=\{a\in\mathbb{N}\mid \mathcal{SG}((2a+1)2^b)=f(a,b,m)\}.$$
The experimental density of $S_{b,m}$ for $b=1, 2, 3, 4$ and $m=0, 1, 2, 3, 4$ are shown in Table \ref{table-sal}. For $b=1$, we measured up to $a=5000$; for $b=2,3$, up to $a=2000$; and for $b=4$, up to $a=1000$.  The associated Maple program can be found at the third author’s website \url{http://www.thotsaporn.com}.

In Figure \ref{fig:salvalues}, we can see some of these values, with the corresponding labels given in Table \ref{table-sal}.  For example, consider the entry of the table marked \textbf{(C)}.  It says that the density of numbers of the form $x=8a+4$ for which $\mathcal{SG}(x)=3a+1$ is $0.561$.  Then we can see that the line in the figure with slope $\frac{3}{8}$ (also marked \textbf{(C)}) has about half density.  Contrast with the entry marked \textbf{(D)}, corresponding to the line with slope $\frac{5}{12}$.  It is very sparse, as seen in the figure.  Notice that the $y$-intercept of each of these lines corresponds to $a=-\frac{1}{2}$, which in each case gives
$$f\left(-\frac{1}{2},b,m\right)=\frac{1}{2m+1}\left(-m2^b-2^{b-1}+\frac{1}{2}+m2^b-m+2^{b-1}-1\right)=\frac{-m-\frac{1}{2}}{2m+1}=-\frac{1}{2},$$
which is ok to be negative, since the game is only meaningfully defined on positive numbers.
Finally, the line marked \textbf{(A)} is $y=\frac{x-1}{2}$, which includes all odd $x$ and some even $x$, per Lemma \ref{lem-Sal-prev results} and Corollary \ref{sal-cor-powersof2}.

\begin{table}[h!]
  \caption{Experimental values of $S_{b,m}$.  The labels \textbf{(B)---(E)} match Figure \ref{fig:salvalues}.}
  \begin{center}
    \begin{tabular}{c|ll|ll|ll|ll} 
       &$\mathcal{SG}(4a+2)$&density&$\mathcal{SG}(8a+4)$&density&$\mathcal{SG}(16a+8)$&density&$\mathcal{SG}(32a+16)$&density\\
       $m$&$=f(a,1,m)$ & $(a\leq 5000)$ & $=f(a,2,m)$ & $(a\leq 2000)$ & $=f(a,3,m)$ & $(a\leq 2000)$ & $=f(a,4,m)$ & $(a\leq 1000)$\\     
      \hline
      $0$&$a$ \;\textbf{(B)} & $0.532$ & $3a+1$\;\textbf{(C)} & $0.561$ & $7a+3$\;\textbf{(E)} & $0.540$ & $5a+7$ & $0.638$\\
      &&&&&&&&\\
      $1$&$\displaystyle\frac{5a+1}{3}$\;\textbf{(D)} & $0.026$ & $\displaystyle\frac{11a+4}{3}$ & $0.056$ & $\displaystyle\frac{23a+10}{3}$ & $0.090$ & $\displaystyle\frac{47a+22}{3}$ & $0.069$\\
      &&&&&&&&\\
      $2$&$\displaystyle\frac{9a+2}{5}$ & $0.037$ & $\displaystyle\frac{19a+7}{5}$ & $0.044$ & $\displaystyle\frac{39a+17}{5}$ & $0.050$ & $\displaystyle\frac{79a+37}{5}$ & $0.046$\\
      &&&&&&&&\\
      $3$&$\displaystyle\frac{13a+3}{7}$ & $0.061$ & $\displaystyle\frac{27a+10}{7}$ & $0.049$ & $\displaystyle\frac{55a+24}{7}$ & $0.046$ & $\displaystyle\frac{111a+52}{7}$ & $0.043$\\
      &&&&&&&&\\
      $4$&$\displaystyle\frac{17a+4}{9}$ & $0.022$ & $\displaystyle\frac{35a+13}{9}$ & $0.030$ & $\displaystyle\frac{71a+31}{9}$ & $0.015$ & $\displaystyle\frac{143a+67}{9}$ & $0.010$\\
    \end{tabular}\label{table-sal}
  \end{center}
\end{table}

\begin{figure}[h!]
    \centering
    \includegraphics[scale=0.6]{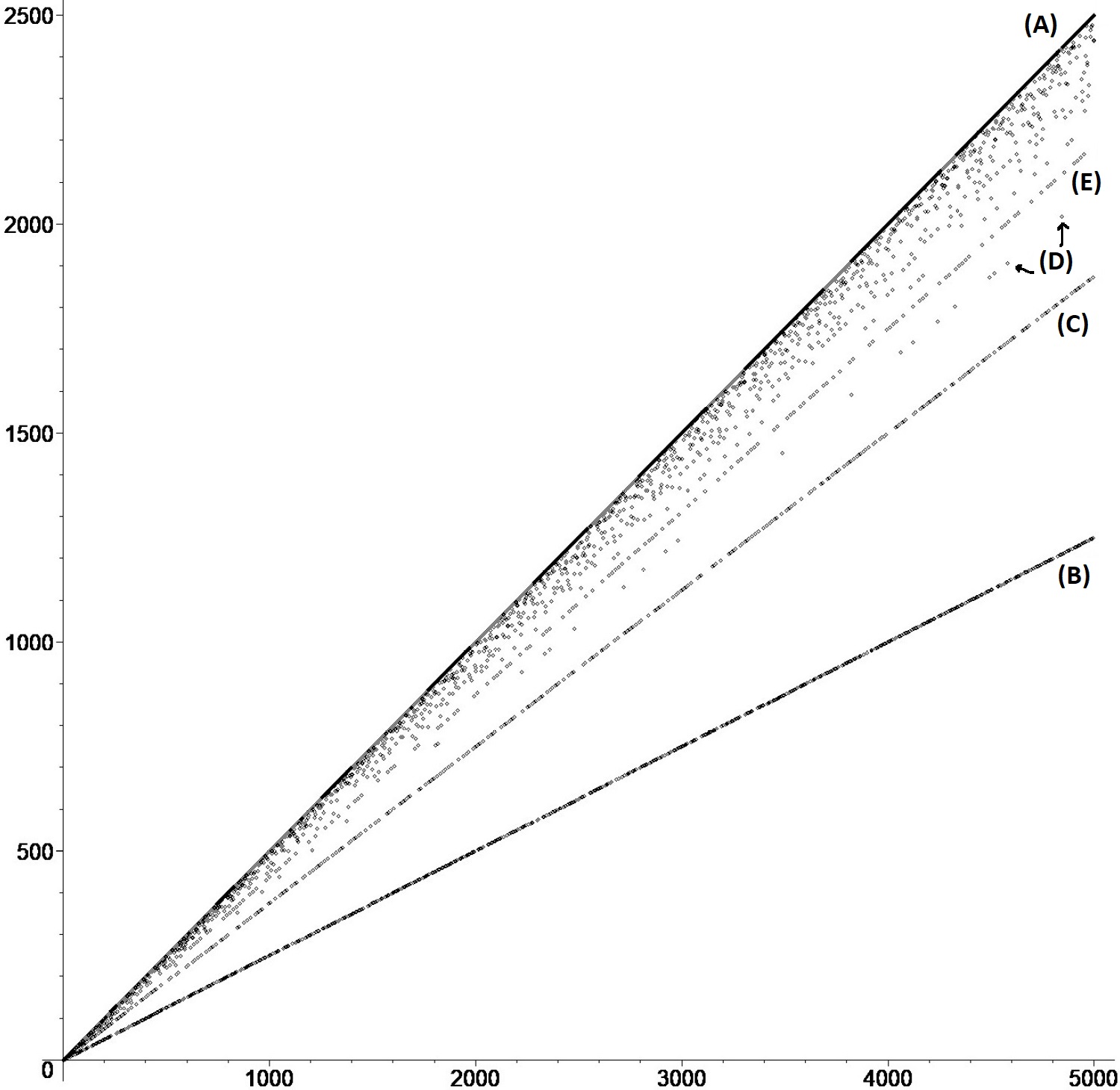}
    \caption{The first $5000$ nim-values of \textsc{saliquant}.  The slopes of the labelled lines are 
    \textbf{(A)} $\frac{1}{2}$, 
    \textbf{(B)} $\frac{1}{4}$, 
    \textbf{(C)} $\frac{3}{8}$, 
    \textbf{(D)} $\frac{5}{12}$, 
    \textbf{(E)} $\frac{7}{16}$
    }
    \label{fig:salvalues}
\end{figure}
We next show a straightforward upper bound for these densities, noting that each of the values in Table \ref{table-sal} are well below this bound.

\begin{lem}
Given $b\geq 1$, $m\geq 0$, the density of $S_{b,m}$ is at most $\frac{1}{2m+1}$.
\end{lem}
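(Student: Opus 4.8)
The plan is to exploit the fact that $\mathcal{SG}$ takes only integer values, so that membership $a\in S_{b,m}$ forces $f(a,b,m)$ to be an integer. This necessary condition alone pins $a$ down to a single residue class modulo $2m+1$, and the desired density bound follows immediately because $S_{b,m}$ is contained in that class.

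First I would write $f(a,b,m)$ in the convenient form given by Theorem \ref{sal-thm-big-one},
$$f(a,b,m)=(2a+1)2^{b-1}-\frac{1}{2}\left(\frac{2a+1}{2m+1}+1\right).$$
For $b\geq 1$ the term $(2a+1)2^{b-1}$ is always an integer, so $f(a,b,m)\in\mathbb{Z}$ if and only if $\tfrac{1}{2}\left(\tfrac{2a+1}{2m+1}+1\right)\in\mathbb{Z}$. Clearing denominators, this quantity equals $\frac{a+m+1}{2m+1}$, so integrality is equivalent to $2m+1\mid a+m+1$; equivalently, since $2(a+m+1)=(2a+1)+(2m+1)$ and $2m+1$ is odd, to $2m+1\mid 2a+1$.

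Next I would observe that this is a single congruence condition: the values of $a$ satisfying $2m+1\mid 2a+1$ form exactly one residue class modulo $2m+1$ (explicitly $a\equiv m \bmod 2m+1$, since $2$ is invertible modulo the odd number $2m+1$). Hence the set of all $a$ for which $f(a,b,m)$ is merely an integer already has density $\frac{1}{2m+1}$. Because $S_{b,m}$ is a subset of this arithmetic progression, its (upper) density is at most $\frac{1}{2m+1}$, which is what we want.

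There is essentially no obstacle in establishing this particular statement: the entire content is the integrality constraint, and once it is identified the counting is routine. The point worth flagging is that the bound is far from tight — the experimental densities in Table \ref{table-sal} sit well below $\frac{1}{2m+1}$ — precisely because we have used only that $f(a,b,m)$ is an integer and not the full force of the equality $\mathcal{SG}((2a+1)2^b)=f(a,b,m)$. Capturing the true densities would require determining, \emph{within} the progression $a\equiv m\bmod 2m+1$, which divisor $d=\frac{2a+1}{2m+1}$ of $2a+1$ is actually selected by the mex recursion; this appears considerably harder and is the content of the density conjectures promised in the abstract.
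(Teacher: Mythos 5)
Your proof is correct and takes essentially the same approach as the paper: both arguments use the integrality of nim-values to force $a\equiv m\bmod (2m+1)$, so that $S_{b,m}$ lies in a single residue class whose density is $\frac{1}{2m+1}$. The only cosmetic difference is that you derive the congruence from the divisor form of $f(a,b,m)$ in Theorem \ref{sal-thm-big-one}, while the paper reduces $(2m+1)f(a,b,m)$ modulo $2m+1$ directly.
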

\begin{proof}
Fix $b\geq 1$, $m\geq 0$, and consider 
\begin{align*}
(2m+1)f(a,b,m)&=\left(a(m2^{b+1}+2^b-1) + m(2^b-1)+(2^{b-1}-1)\right)\\
&=a\left((2m+1)2^{b}-1\right)+(2m+1)2^{b-1}-m-1\\
&\equiv -a-m-1\bmod (2m+1)
\end{align*}
Thus $f(a,b,m)$ is only an integer when $a\equiv m\bmod (2m+1)$, and so $\frac{1}{2m+1}$ is an upper bound for how frequently $\mathcal{SG}((2a+1)2^b)$ can attain this value.
\end{proof}

\newpage

Given the values in Table \ref{table-sal}, we suspect that most of these values are actually $0$:

\begin{conj}
For a given $b>0$, 
\begin{itemize}
    \item If $m=0$, $S_{b,m}$ has positive density, and 
    \item If $m>0$, $S_{b,m}$ has density $0$, but is nonempty.
\end{itemize}
\end{conj}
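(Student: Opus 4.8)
The plan is to reinterpret $\mathcal{SG}((2a+1)2^b)$ as the outcome of a mex computation in which the odd options are completely understood and only the even options remain mysterious. Writing $n=(2a+1)2^b$, an odd non-divisor $k$ of $n$ is exactly an odd $k\nmid 2a+1$, and by Lemma \ref{lem-Sal-prev results} the option $n-k$ has nim-value $\frac{n-k-1}{2}$. As $k$ runs over the odd numbers $1,3,\ldots,n-1$, these values run over all of $0,1,\ldots,(2a+1)2^{b-1}-1$, skipping precisely the \emph{gap} values $g_d:=(2a+1)2^{b-1}-\frac{d+1}{2}$ as $d$ runs over the divisors of $2a+1$. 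These gap values are exactly the candidate nim-values $f(a,b,m)$ of Theorem \ref{sal-thm-big-one}. Consequently $\mathcal{SG}(n)$ equals the least $g_d$ not realized by any even option, and $a\in S_{b,m}$ if and only if the first unfilled gap is $g_d$ with $d=(2a+1)/(2m+1)$: every gap $g_{d'}$ with $d'>d$ is filled by an even option and $g_d$ is not. In particular $a\in S_{b,0}$ (the largest divisor $d=2a+1$) if and only if the single smallest gap $g_{2a+1}=f(a,b,0)$ is unfilled.

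Next I would make ``filling'' explicit through the defect $\delta(v)=\frac{v}{2}-\mathcal{SG}(v)$, which by Theorem \ref{sal-thm-big-one} always equals $\frac{d''+1}{2}$ for some divisor $d''$ of the odd part of $v$. A short computation shows that an even option $v=n-k$ realizes $g_d$ exactly when $\mathcal{SG}(v)=g_d$, equivalently when its defect equals $\delta(v)=\frac{d+1-k}{2}$ (so only $k\le d-1$ can contribute). Specializing to $b=1$ (Corollary \ref{sal-cor-4a+2}) this collapses: the smallest gap $g_{2a+1}=a$ is filled if and only if some even $v$ in the window $2a+2\le v\le 4a$ has $\mathcal{SG}(v)=a$, and writing $v=2a+1+d$ (so its defect is $\frac{d+1}{2}$) the relation $v\equiv 2a+1\pmod d$ forces $d\mid 2a+1$ automatically. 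Thus, for $b=1$,
\begin{equation*}
a\in S_{1,0}\iff \mathcal{SG}(2a+1+d)\neq a\text{ for every divisor } d\mid 2a+1 \text{ with } 1\le d<2a+1
\end{equation*}
(ignoring the finitely many $d$ for which $2a+1+d$ fails to be a legal option). The condition $\mathcal{SG}(2a+1+d)=a$ asks the even number $2a+1+d$ to attain the near-maximal nim-value of defect $\frac{d+1}{2}$; the case $d=1$ asks $2(a+1)$ to attain its \emph{maximum} value, defect $1$.

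I would then attack the two density claims through this dictionary. For the positive density of $S_{b,0}$ the task becomes showing that the set of $a$ for which \emph{some} divisor $d$ forces $\mathcal{SG}(2a+1+d)=a$ has density strictly below $1$; the natural route is to bound, for each fixed $d$, the density of $a$ with $\mathcal{SG}(2a+1+d)$ equal to its defect-$\frac{d+1}{2}$ value, and sum over the (typically few) divisors of $2a+1$. The $d=1$ term reduces to the density of even numbers attaining their maximal nim-value, which by Corollary \ref{sal-cor-powersof2} (and the lemma on twice a Mersenne number above) is nonempty but plausibly sparse. For the density-zero claim when $m>0$, membership in $S_{b,m}$ already forces $2m+1\mid 2a+1$ (density $\tfrac{1}{2m+1}$) and, crucially, forces \emph{all} of the smaller gaps $g_{d'}$ with $d'>(2a+1)/(2m+1)$ to be filled simultaneously; I would show the number of $a\le N$ meeting this conjunction of coincidences is $o(N)$ by combining the progression restriction with an upper bound on the count of simultaneous fillings. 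For nonemptiness I would build explicit witnesses generalizing the $2a+1=21$ computation of Lemma \ref{sal-21case}, choosing $2a+1=(2m+1)q$ with $q$ a suitably large prime and tracking the recursion to force exactly the first $m$ gaps to fill.

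The hard part throughout is that ``gap filled'' is genuinely \emph{self-referential}: it is controlled by the nim-values of smaller even numbers, for which Theorem \ref{sal-thm-big-one} constrains only the \emph{form} of $\mathcal{SG}$ and gives no handle on \emph{which} admissible value actually occurs. The densities in Table \ref{table-sal} are therefore the stationary statistics of a complicated number-theoretic recursion, and both a positive lower bound (for $m=0$) and a vanishing upper bound (for $m>0$) require understanding how often an even number realizes each particular defect. I expect this self-consistency step --- showing the defect distribution stabilizes and is compatible only with the claimed densities --- to be the principal obstacle, very likely requiring analytic input on divisor sums twisted by the nim-value constraint rather than the elementary arguments used earlier in this section.
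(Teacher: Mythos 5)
The statement you are addressing is left as a \emph{conjecture} in the paper: the authors offer only the experimental densities of Table \ref{table-sal} and no proof, so there is nothing of theirs to compare your argument against --- the only question is whether your proposal proves it, and it does not. To give credit first: your opening reduction is correct and genuinely useful. For $n=(2a+1)2^b$ the odd options realize every value in $\{0,1,\ldots,(2a+1)2^{b-1}-1\}$ except the gaps $g_d=(2a+1)2^{b-1}-\frac{d+1}{2}$ with $d\mid 2a+1$, so $\mathcal{SG}(n)$ is the least gap not filled by an even option; membership $a\in S_{b,m}$ then says exactly that all gaps $g_{d'}$ with $d'>(2a+1)/(2m+1)$ are filled while $g_{(2a+1)/(2m+1)}$ is not. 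This re-derives Theorem \ref{sal-thm-big-one} through the mex rather than through Lemma \ref{lemma-sal-divisibility}, and your $b=1$ dictionary ($a\in S_{1,0}$ iff $\mathcal{SG}(2a+1+d)\neq a$ for every proper divisor $d$ of $2a+1$, modulo legality of the option) is also correct.

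However, from that point on nothing is proved. Every step that would actually establish a density statement is announced rather than carried out: ``the natural route is to bound, for each fixed $d$, the density of $a$ with $\mathcal{SG}(2a+1+d)$ equal to its defect value'' (no such bound is proved or even sketched); ``I would show the number of $a\le N$ meeting this conjunction of coincidences is $o(N)$'' (the required upper bound on simultaneous fillings is precisely the missing content); ``I would build explicit witnesses \ldots tracking the recursion to force exactly the first $m$ gaps to fill'' (no witness is exhibited, and nothing in the paper's toolkit controls \emph{which} admissible value an even number actually attains --- Lemma \ref{sal-21case} only narrows $a=10$ to two possibilities, it does not decide between them). You correctly identify the obstruction yourself: the filling condition is self-referential, since it depends on the nim-values of smaller even numbers, about which Theorem \ref{sal-thm-big-one} constrains only the \emph{form} of the value. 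But naming the obstruction is not overcoming it. As it stands, your proposal is a correct reduction of the conjecture to an equivalent and still open question --- how often an even number attains each possible defect --- together with a research plan; none of the three claims (positive density for $m=0$, density zero for $m>0$, nonemptiness for $m>0$) is established, and the statement remains exactly as open after your proposal as it is in the paper.
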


We can also look at how fixing $a$ and $b$ affects the value of $m$.  Define $M(a,b) = m$ where $\mathcal{SG}((2a+1)2^b) = f(a,b,m)$, and consider Table \ref{table-sal-2}. 

\begin{table}[h!]
  \caption{Experimental values of $M(a,b)$.}
  \begin{center}
    \begin{tabular}{c|l|l|l|l|l|l|l|l|l} 
        &$a=3$&$a=4$&$a=5$&$a=6$&$a=7$&$a=8$&$a=9$&$a=10$&$a=11$ \\ 
      \hline
      $b=1$&$3$&$0$&$0$&$6$&$2$&$0$&$0$&$1$&$0$\\
      $b=2$&$0$&$0$&$0$&$0$&$0$&$8$&$0$&$1$&$0$\\
      $b=3$&$0$&$0$&$0$&$6$&$0$&$0$&$9$&$0$&$0$\\
      $b=4$&$0$&$0$&$0$&$0$&$0$&$0$&$0$&$0$&$11$\\
      $b=5$&$0$&$0$&$0$&$0$&$0$&$0$&$0$&$0$&$0$\\
      $b=6$&$0$&$0$&$0$&$0$&$0$&$0$&$9$&$0$&$0$\\
      $b=7$&$0$&$0$&$0$&$0$&$0$&$0$&$0$&$0$&$0$\\
      $b=8$&$0$&$0$&$0$&$0$&$0$&$0$&$0$&$0$&$0$\\
      $b=9$&$0$&$0$&$0$&$0$&$0$&$0$&$0$&$0$&$0$\\
      $b=10$&$0$&$0$&$0$&$0$&$0$&$0$&$0$&$0$&$0$\\
      
    \end{tabular}\label{table-sal-2}
  \end{center}
\end{table}
If one imagines running along any row of Table \ref{table-sal-2} and tracking the distribution of $m$, they would get the densities achieved in Table \ref{table-sal} as $a$ tends toward infinity. For our second conjecture, we instead consider the behavior of $m$ rather than of $a$, noting that it is difficult to generate more data as the values grow exponentially as $b$ increases.

Given the sparsity of each column, we suspect that in each column all but a finite number of values are non-zero.
\begin{conj}
    For a given $a>0$, for sufficiently large $b$, $M(a,b)=0$, in which case 
    
    $\mathcal{SG}((2a+1)2^b)=(2a+1)2^{b-1}-a-1$.
\end{conj}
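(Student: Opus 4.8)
The plan is to compute $\mathcal{SG}\big((2a+1)2^b\big)$ as a $\mex$ and show that for large $b$ the minimal admissible value $T:=(2a+1)2^{b-1}-a-1=\tfrac n2-a-1$ (with $n=(2a+1)2^b$) is exactly that $\mex$. First I would record the odd options, as in the proof of Lemma \ref{lem-smallcases}: every odd $k>2a+1$ is a non-divisor of $n$, so $n-k$ runs through all odd numbers $1,3,\dots,n-(2a+3)$, and by Lemma \ref{lem-Sal-prev results} these realize precisely the nim-values $0,1,\dots,T-1$. The only odd number whose nim-value would be $T$ is $n-(2a+1)$, but $2a+1\mid n$, so it is not an option. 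Hence $M(a,b)=0$ if and only if no \emph{even} option of $n$ has nim-value $T$. This reduction is valid for every $a,b$, not just the special odd parts of Lemma \ref{lem-smallcases}.

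Next I would isolate the even options that can matter. Since $n-k$ is even, Lemma \ref{lem-Sal-prev results} gives $\mathcal{SG}(n-k)<\tfrac{n-k}2=\tfrac n2-\tfrac k2$, so $\mathcal{SG}(n-k)=T$ forces $k\leq 2a$. For $b>\log_2(2a)$ the dangerous $k$ are exactly the even non-divisors $k=2^{v_2(k)}\ell\leq 2a$ with $\ell\nmid 2a+1$, a finite set \emph{independent of $b$}. Fixing such a $k$ and writing $n-k=(2A+1)2^{j}$ with $j=v_2(k)$ and odd part $2A+1=(2a+1)2^{b-j}-\ell$, Theorem \ref{sal-thm-big-one} gives $\mathcal{SG}(n-k)=\tfrac{n-k}2-\tfrac{d'+1}2$ for a factor $d'$ of $2A+1$, and a one-line computation shows $\mathcal{SG}(n-k)=T$ iff $d'=c_k:=2a+1-k$. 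In particular $T$ can be hit only when $c_k\mid 2A+1$; using $2a+1\equiv k\pmod{c_k}$ and $k=2^{j}\ell$, this divisibility collapses to the clean condition $c_k\mid \ell(2^b-1)$.

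This yields a dichotomy. For every $b$ with $c_k\nmid \ell(2^b-1)$, Theorem \ref{sal-thm-big-one} by itself forbids the value $T$ at the option $n-k$, so such $b$ are harmless. The trouble is that $\ell(2^b-1)\bmod c_k$ is \emph{periodic} in $b$: with $c'=c_k/\gcd(c_k,\ell)$, the divisibility holds exactly when $b\equiv 0\pmod{\mathrm{ord}_{c'}(2)}$ (and always, when $c'=1$), an infinite progression containing arbitrarily large $b$. So unless $a$ has no dangerous $k$ at all (e.g. $2a+1\in\{1,3,5,9\}$, recovering Lemma \ref{lem-smallcases}), the divisibility test can never finish the job, and on the ``bad'' progression I must instead show directly that the genuine factor satisfies $d'(n-k)>c_k$, equivalently $\mathcal{SG}(n-k)<\tfrac{n-k}2-\tfrac{c_k+1}2$.

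The crux — and the step I expect to be genuinely hard — is exactly this. As $b\to\infty$ with $j=v_2(k)$ fixed, the option $n-k=(2A+1)2^{j}$ has a fixed small power-of-two part but an odd index $A$ growing like $2^{b}$, and avoiding $d'=c_k$ means avoiding the single value $m=\tfrac12\!\left(\tfrac{2A+1}{c_k}-1\right)$ of $M(A,j)$, which is a \emph{large} value of $M$. Determining which factor actually occurs is precisely what Theorem \ref{sal-thm-big-one} does not resolve: it is the distribution question of Table \ref{table-sal} and of the first conjecture. One can set this up as strong induction on $n$, so the nim-values of the options are available in principle, but the odd parts $2A+1$ that arise are generic, so no cleaner smaller instance pins down $d'(n-k)$. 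Thus the main obstacle is ruling out these increasingly sparse large-$M$ events for \emph{every} large $b$, which entangles the second conjecture with the first; short of a new \emph{exact} (not merely density-theoretic) handle on the occurrence of large $M$-values, the method above proves the conjecture unconditionally whenever the dangerous set is empty and otherwise reduces the general case to this single recursive obstruction.
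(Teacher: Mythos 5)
You have not proved the statement, and you concede as much in your final paragraph, so the verdict has to be that there is a genuine gap --- but it is worth putting this in context: the statement you were given is one of the paper's two \emph{conjectures}. The paper offers no proof either; it offers only the supporting special cases of Lemma \ref{lem-smallcases} (odd parts $1,3,5,9$, i.e.\ $a=0,1,2,4$) and Lemma \ref{sal-21case} ($a=10$). Your reduction is correct as far as it goes, and it is essentially the paper's own method --- the mex bookkeeping from the proof of Lemma \ref{lem-smallcases} combined with Theorem \ref{sal-thm-big-one} --- pushed to its natural limit. Writing $n=(2a+1)2^b$ and $T=\frac n2-a-1$: the odd options realize exactly the values $0,\dots,T-1$ and cannot realize $T$, so $M(a,b)=0$ fails only if some even option $n-k$, with $k\leq 2a$ an even non-divisor, has nim-value $T$; by Theorem \ref{sal-thm-big-one} this forces the factor $d'$ of the odd part of $n-k$ to equal $c_k=2a+1-k$, which requires $c_k\mid\ell(2^b-1)$ where $\ell$ is the odd part of $k$; and the dangerous set is empty exactly in the cases $2a+1\in\{1,3,5,9\}$ covered by Lemma \ref{lem-smallcases}. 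All of these computations check out.

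The gap is the one you flag, and it is fatal to the method, not merely difficult: the divisibility criterion can never deliver ``for all sufficiently large $b$'' for any other $a$. The condition $c_k\mid\ell(2^b-1)$ recurs along an arithmetic progression of $b$; worse, whenever $c_k\mid\ell$ --- in particular for $k=2a$ with $a$ not a power of two, where $c_k=1$ --- it holds for \emph{every} $b$, so Theorem \ref{sal-thm-big-one} excludes nothing at that option, ever. On those $b$ one must show that the divisor actually realized at the option differs from $c_k$; for $k=2a$ this means showing $\mathcal{SG}(n-2a)\neq\frac{n-2a}{2}-1$, i.e.\ that this option avoids its maximal admissible value. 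That is a statement of exactly the same type as the conjecture itself (prescribing which value of $M$ a position of the form $(\mathrm{odd})\cdot 2^j$ avoids), at a position whose odd part again grows with $b$, so the strong induction you gesture at has no smaller resolved instance to close on. Neither the paper nor your argument has any handle on \emph{which} divisor occurs --- that is precisely the content of the paper's other (also open) density conjecture --- so what you have is a correct, and in places sharper, restatement of the obstruction rather than a proof. Indeed it stops at the same place the paper does: Lemma \ref{sal-21case} is your reduction specialized to $a=10$, where the unexcluded dangerous options are exactly what leaves the ambiguity $m\in\{0,1\}$ unresolved there.
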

Note that Lemma \ref{lem-smallcases} proves a stronger form of the conjecture for $a=0,1,2,4$, and Lemma \ref{sal-21case} shows that when $a=10$ we have either $m=0$ or $m=1$.

\section{NONTOTIENT}

Denoting $\phi(n)=\lvert\{1\leq k\leq n\mid k\text{ is not a factor of }n\}\rvert$, \cite{sum} also define two games based on $\phi(n)$:
\begin{itemize}
    \item \textsc{Totient}: $\opt(n)=\phi(n)$
    \item \textsc{Nontotient}: $\opt(n)=n-\phi(n)$
\end{itemize}

In this section we make some headway in understanding \textsc{nontotient}.  
First recall that $\phi(ab)=\phi(a)\phi(b)$, and for prime $p$, $\phi(p^k)=p^{k-1}(p-1)$.  Thus if $n=p^{k_1}_1\ldots p^{k_m}_m$, we have $\phi(n)=\prod {p_i}^{k_i - 1}(p_i -1)$.  In particular $\phi(1)=1$. Define $g(n):=\opt(n)=n-\phi(n)$.  We immediately obtain:

\begin{lem}\label{lem-nont-parity}
For $n>2$, $\phi(n)$ is even, and so $g(n)$ has the same parity as $n$.    
\end{lem}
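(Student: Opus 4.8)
The plan is to prove the two claims of Lemma \ref{lem-nont-parity} directly from the multiplicative formula for $\phi$ recalled just before the statement. First I would show that $\phi(n)$ is even whenever $n>2$. The cleanest route is to split on the structure of $n$. If $n$ has an odd prime factor $p$, then $\phi(p^k)=p^{k-1}(p-1)$ contributes the even factor $(p-1)$ to the product $\phi(n)=\prod_i p_i^{k_i-1}(p_i-1)$, forcing $\phi(n)$ to be even. The remaining case is when $n$ has no odd prime factor, i.e. $n=2^k$ is a pure power of $2$; then $\phi(2^k)=2^{k-1}$, which is even precisely when $k\geq 2$, that is, when $n\geq 4$. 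Since we are assuming $n>2$, this covers $n=4,8,16,\dots$, and the only integer $n>2$ that is a power of $2$ but fails $k\geq 2$ would be $n=2$ itself, which is excluded. Thus in every case $n>2$ gives $\phi(n)$ even.

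Next I would deduce the parity statement for $g(n)=n-\phi(n)$. Since $\phi(n)$ is even for $n>2$, subtracting it from $n$ does not change the residue mod $2$, so $g(n)\equiv n \pmod 2$; that is, $g(n)$ has the same parity as $n$. This step is a one-line consequence once the evenness of $\phi(n)$ is established, so no real obstacle arises here.

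I expect the only genuinely delicate point to be the bookkeeping in the pure-power-of-$2$ case, specifically remembering that $\phi(2)=1$ is odd and $\phi(4)=2$ is even, so the hypothesis $n>2$ is exactly what is needed and cannot be weakened to $n\geq 2$. It is worth double-checking the small boundary values ($\phi(1)=1$, $\phi(2)=1$, $\phi(3)=2$, $\phi(4)=2$) against the claim to confirm the threshold is sharp: indeed $g(2)=2-1=1$ is odd while $2$ is even, so $n=2$ is a genuine exception, consistent with the stated range $n>2$.

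Overall the proof is short and elementary: establish evenness of $\phi(n)$ for $n>2$ by a case split on whether $n$ has an odd prime factor, then conclude the parity of $g(n)$ immediately. There is no hard part in the mathematical sense; the main thing to get right is the careful handling of the powers-of-$2$ case so that the role of the hypothesis $n>2$ is transparent.
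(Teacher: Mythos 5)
Your proof is correct and follows exactly the route the paper intends: the paper states this lemma without proof as an immediate consequence of the product formula $\phi(n)=\prod_i p_i^{k_i-1}(p_i-1)$, and your case split (an odd prime factor $p$ contributing the even factor $p-1$, versus $n=2^k$ with $k\geq 2$ giving $\phi(n)=2^{k-1}$ even) is precisely the standard way to make that immediate step explicit. The boundary check at $n=2$, showing the hypothesis $n>2$ is sharp, is a sensible addition but not a departure from the paper's approach.
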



For the rest of the section, let $p$ and $q$ always represent primes.  As noted in \cite{sum}, $g(p^k)=p^{k-1}$.  Hence the game on $p^k$ terminates after $k$ moves and so $\mathcal{SG}(p^k)=0$ if and only if $k$ is even.  They also note that $g(p^k q)=p^{k-1}(q+p-1)$, and so in the case that $q+p-1$ is a power of $p$, this becomes easy to compute.  Consider for example the prime pairs $(p,q)=(2,7)\text{ or }(3,7)$.  
We can extend this as follows.  First note that $$g(p^k q^l)=p^k q^l - p^{k-1}(p-1)q^{l-1}(q-1)=p^{k-1}q^{l-1}(p+q-1).$$  Then we have

\begin{thm}\label{lem-nont}\quad

\begin{enumerate}[(a)]
    \item If $q=p^b-p+1$ where $b$ is even, then $\mathcal{SG}(p^k q^l)=0$ if and only if $q$ is even.  
    \item If $q=p^b-p+1$ where $b$ is odd, then $\mathcal{SG}(p^k q^l)=0$ if and only if $q+l$ is even.
\end{enumerate}
\end{thm}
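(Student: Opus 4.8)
The plan is to exploit the observation recorded in the introduction that for a no-choice game like \textsc{nontotient} the value $\mathcal{SG}(n)$ is simply the parity of the number of moves needed to descend from $n$ to the terminal position $1$. So the entire theorem reduces to counting, starting from $p^k q^l$, exactly how many applications of $g$ are needed to reach $1$, and then reading off the parity. The hypothesis $q = p^b - p + 1$ is engineered precisely so that $p + q - 1 = p^b$; substituting this into the identity $g(p^k q^l) = p^{k-1} q^{l-1}(p+q-1)$ established just before the theorem gives
$$g(p^k q^l) = p^{k-1} q^{l-1} p^b = p^{\,k + b - 1}\, q^{\,l-1}.$$
Thus each move trades one factor of $q$ for $b-1$ additional factors of $p$, while keeping the support of the position on the same two primes.

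Next I would run this as an induction on the number of moves. Since $q = p^b - p + 1 > p$ for $b \geq 2$ (and $b \geq 2$ is forced, as $b = 1$ gives the non-prime $q = 1$), the primes $p$ and $q$ are distinct, so at every intermediate step with a positive power of $q$ remaining the multiplicative formula for $g$ applies unchanged. A routine induction then shows that after $j \leq l$ moves the position is $p^{\,k + j(b-1)}\, q^{\,l-j}$. In particular, after exactly $l$ moves the $q$-part is exhausted and we land on the pure prime power $p^{\,k + l(b-1)}$. From here I would invoke the elementary fact $g(p^m) = p^{m-1}$, so that a prime power $p^m$ requires exactly $m$ further moves to reach $1$. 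Adding the two phases, the total number of moves from $p^k q^l$ down to $1$ is
$$l + \bigl(k + l(b-1)\bigr) = k + l b.$$

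Finally I would read off the parity of $k + lb$ in the two cases. When $b$ is even, $lb$ is even and $k + lb \equiv k \pmod 2$, so $\mathcal{SG}(p^k q^l) = 0$ exactly when $k$ is even; when $b$ is odd, $lb \equiv l \pmod 2$ and $k + lb \equiv k + l \pmod 2$, so $\mathcal{SG}(p^k q^l) = 0$ exactly when $k + l$ is even. These are the two stated conditions. I do not expect a deep obstacle: the content lies entirely in the clean telescoping of the exponents under repeated application of $g$. The only points genuinely demanding care are verifying $q \neq p$ (so that the two-prime factorization, and hence the closed form for $g$, persists at every intermediate position until the $q$-part vanishes) and correctly tallying the accumulated powers of $p$, so that the move count is $k + lb$ and not, say, $k + l(b-1)$. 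Once the count is pinned down, the parity bookkeeping is immediate.
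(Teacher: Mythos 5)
Your proof is correct and takes essentially the same route as the paper's: substitute $p+q-1=p^b$ into $g(p^kq^l)=p^{k-1}q^{l-1}(p+q-1)$ to get $g(p^kq^l)=p^{k+b-1}q^{l-1}$, exhaust the $q$-part in $l$ moves, count $k+l(b-1)$ further moves down the prime power via $g(p^m)=p^{m-1}$, and read the parity of the total $k+lb$. One remark: your conclusions (``$k$ even'' in (a), ``$k+l$ even'' in (b)) differ from the theorem's literal wording (``$q$ even'', ``$q+l$ even''), but the latter is evidently a typo for $k$ --- indeed $q=p^b-p+1$ is always odd, so the literal statement would be vacuous --- and your version is exactly what the paper's own move count $k+lb$ establishes.
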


\begin{proof}
In this case $g(p^k q^l)=p^{k-1}q^{l-1}\left(p+(p^b-p+1)-1\right) = p^{k+b-1}q^{l-1}$. So after $l$ moves, the position will be $p^{k+l(b-1)}$, and thus the game terminates after $k+l(b-1)+l = k+lb$ moves. 
\end{proof}

Some prime pairs $(p,q)$ that satisfy part (a) are $(2,3)$, $(3,7)$, $(7,43)$, $(13, 157)$, $(3,79)$, $(11,14631)$, $(3,727)$.  For part (b) we have $(2,7)$, $(7,337)$, and $(19, 2476081)$.  Part (b) also applies to each pair $(2, 2^p -1)$ for each Mersenne prime $2^p-1$.
As a next step, one might analyze cases which reduce to one of the above cases in a predictable number of steps.  For example

\begin{cor}\label{cor-nont1}
$\mathcal{SG}(2^k 5)=0$ if and only if $k$ is odd.
\end{cor}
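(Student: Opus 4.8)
The plan is to exploit the fact that \textsc{nontotient} is a game of no choice, so that $\mathcal{SG}(n)=0$ precisely when the length of the unique play from $n$ down to the terminal position $1$ is even. The entire task therefore reduces to counting moves. The guiding idea is that $2^k5$ should collapse in a single move onto a position already governed by Theorem \ref{lem-nont}, at which point the remaining move count is handed to us for free.

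Concretely, assume $k\geq 1$ and compute the first option. Applying the formula $g(p^kq^l)=p^{k-1}q^{l-1}(p+q-1)$ with $p=2$, $q=5$, $l=1$ gives $g(2^k5)=2^{k-1}(2+5-1)=2^{k-1}\cdot 6=2^k\cdot 3$. Thus a single move sends $2^k5$ to $2^k\cdot 3$, replacing the factor $5$ (which is not of the form $p^b-p+1$) with the tractable factor $3$.

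Now observe that $2^k\cdot 3$ is exactly of the shape treated in Theorem \ref{lem-nont}(a): writing $p=2$ and $q=3$, we have $3=2^2-2+1=p^b-p+1$ with $b=2$ even, and $l=1$. The proof of that theorem shows the play on $p^kq^l$ reaches $1$ after $k+lb$ moves, which here is $k+2$. Adding back the one move already spent to arrive at $2^k\cdot 3$, the total number of moves from $2^k5$ to $1$ is $(k+2)+1=k+3$. Hence $\mathcal{SG}(2^k5)=0$ if and only if $k+3$ is even, that is, if and only if $k$ is odd.

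There is little genuine difficulty here; the only points requiring care are the bookkeeping of the move count, so as to avoid an off-by-one when splicing the reduction step onto the count inherited from Theorem \ref{lem-nont}, and the boundary case $k=0$. For $k=0$ the multiplicative formula for $g$ no longer applies, but $n=5$ is prime with $g(5)=5-\phi(5)=1$, a single move, so $\mathcal{SG}(5)=1\neq 0$, consistent with $k=0$ being even. The main conceptual step is simply the recognition that one application of $g$ carries the original position into the family already analyzed.
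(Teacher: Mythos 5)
Your proof is correct and takes essentially the same route as the paper's one-line argument: compute $g(2^k5)=2^{k-1}\cdot 6=2^k\cdot 3$ and apply Theorem \ref{lem-nont}(a) with $(p,q,b,l)=(2,3,2,1)$. Your explicit move count $k+3$ and the check of the boundary case $k=0$ are sound additions (and working from the move count $k+lb$ in the theorem's proof neatly sidesteps the misprinted parity condition in the theorem's statement), but the underlying approach is identical.
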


\begin{proof}
Here $g(2^k 5)=2^{k-1}(6)=2^{k}3$, and so the result follows by Theorem \ref{lem-nont} (a).
\end{proof}

The authors of \cite{sum} were able to use Harold Shapiro's height function, $H(n)=H(\phi(n))+1$, to give a method for computing the nim-value of any natural number in \textsc{totient}.  Motivated by this success, they suggest analyzing a class function $\dist(n)=i$, which gives the least $i$ for which $g^i(n)$ is a prime power.  We instead analyze the function $C(n)=i$ if $g^i(n)=1$.  The initial values are:

\bigskip

\begin{tabular}{c|cccccccccccccccccccc}
    $n$ & $1$ & $2$ & $3$ & $4$ & $5$ & $6$& $7$& $8$& $9$& $10$& $11$& $12$& $13$& $14$& $15$& $16$& $17$& $18$& $19$& $20$ \\
    \hline
    $C(n)$ & $0$ & $1$ & $1$ & $2$ & $1$ & $3$ & $1$ & $3$ & $2$ & $4$ & $1$ & $4$ & $1$ & $4$ & $2$ & $4$ & $1$ & $5$ & $1$ & $5$
\end{tabular}

\newpage

\begin{lem}\label{lem-nont-C1}
    $C(4n) = C(2n) + 1$.
\end{lem}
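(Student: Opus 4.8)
The plan is to reduce the $g$-dynamics of $4n$ to those of $2n$ via a multiplicative scaling identity. The first step is to establish the key relation
$$g(2e)=2g(e)\qquad\text{for every even }e.$$
Writing $e=2^a m$ with $m$ odd and $a\geq 1$, multiplicativity of $\phi$ gives $\phi(e)=2^{a-1}\phi(m)$ and $\phi(2e)=2^a\phi(m)$, so that $g(e)=2^{a-1}(2m-\phi(m))$ while $g(2e)=2^a(2m-\phi(m))=2g(e)$. Applying this with $e=2n$ already yields $g(4n)=2g(2n)$, the first instance of the scaling that I intend to propagate along the whole trajectory.

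The second step is a structural observation about the $g$-orbit of an even number. Since $\phi(x)\geq 1$ for $x>1$, the map $g$ is strictly decreasing on integers exceeding $1$, and by Lemma \ref{lem-nont-parity} every even $x>2$ maps to an even number, which is then necessarily at least $2$. Hence the orbit of $2n$ is a strictly decreasing sequence of even numbers that can leave the even integers only by reaching $2$ and then mapping to $g(2)=1$. Writing $x_i=g^i(2n)$ and $C=C(2n)$, this shows $x_i$ is even for all $0\leq i\leq C-1$ (in fact $x_{C-1}=2$ and $x_C=1$, since the only even prime is $2$).

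The third step combines these by induction to prove $g^i(4n)=2x_i$ for $0\leq i\leq C$. The base case $i=0$ is just $4n=2\cdot 2n$. For the inductive step, when $i<C$ the value $x_i$ is even, so the scaling relation gives $g^{i+1}(4n)=g(2x_i)=2g(x_i)=2x_{i+1}$. At $i=C$ this produces $g^{C}(4n)=2x_C=2$, and one further application gives $g^{C+1}(4n)=g(2)=1$. Therefore $C(4n)=C+1=C(2n)+1$, as claimed.

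The main obstacle — indeed the only delicate point — is the second step, because the doubling relation $g(2e)=2g(e)$ holds \emph{only} when $e$ is even; the induction would collapse the instant the orbit of $2n$ hit an odd value. Lemma \ref{lem-nont-parity} is exactly what rules this out, keeping the orbit even until the terminal descent $2\mapsto 1$. This is also why the extra $+1$ emerges precisely at the bottom of the orbit rather than anywhere earlier: the scaling is exact all the way down to $2x_C=2$, and only the final step $2\mapsto 1$ has no counterpart in the orbit of $2n$.
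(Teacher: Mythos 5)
Your proof is correct and follows essentially the same approach as the paper's: both derive the scaling identity $g(2e)=2g(e)$ for even $e$ from multiplicativity of $\phi$ and propagate it down the orbit of $2n$. The only difference is that you spell out explicitly the induction and the parity argument (via Lemma \ref{lem-nont-parity}) that keeps the orbit even, a point the paper's proof leaves implicit in the step ``if $g^i(2n)=1$, then $g^i(4n)=2$.''
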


\begin{proof}
    Note that for $k>1$ and $m$ odd, $\phi(2^k m)=\phi(2^k)\phi(m)=2^{k-1}\phi(m)=2\phi(2^{k-1}m)$.  Hence we have
    $g(4n) = 4n - \phi(4n) = 4n - 2\phi(2n) = 2(2n - \phi(2n)) = 2g(2n)$. So if $g^i(2n) = 1$, then $g^i(4n) = 2$, which means $g^{i+1}(4n) = 1$. 
\end{proof}

\begin{cor}
    $\mathcal{SG}(4n) = 1 - \mathcal{SG}(2n)$.
\end{cor}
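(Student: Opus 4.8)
The plan is to exploit the fact that \textsc{nontotient} is a game of no choice, so that every nim-value is determined entirely by a parity. As recorded in the introduction, for such a game $\mathcal{SG}(n)$ is the parity of the number of moves needed to reach the terminal position $1$, which is exactly what the function $C$ counts. First I would make this precise: since every non-terminal position has a single option, the nim-values alternate between $0$ and $1$ along any play sequence (the $\mex$ of a singleton $\{v\}$ is $1$ when $v=0$ and $0$ otherwise), and the terminal position $1$ has $\mathcal{SG}(1)=\mex(\varnothing)=0$. Consequently $\mathcal{SG}$ is $\{0,1\}$-valued on all of $\mathbb{N}$, and $\mathcal{SG}(n)=C(n)\bmod 2$.

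With that identification in hand, the corollary is a one-line translation of Lemma \ref{lem-nont-C1}. That lemma gives $C(4n)=C(2n)+1$. Reducing modulo $2$, the parity of $C(4n)$ is the opposite of the parity of $C(2n)$, so $\mathcal{SG}(4n)$ and $\mathcal{SG}(2n)$ have opposite values. Because both nim-values lie in $\{0,1\}$, flipping the parity is precisely the operation $v\mapsto 1-v$, and therefore $\mathcal{SG}(4n)=1-\mathcal{SG}(2n)$, as claimed.

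There is essentially no obstacle here: all of the arithmetic content is already carried by Lemma \ref{lem-nont-C1}. The only point that requires care is to make the identification $\mathcal{SG}(n)=C(n)\bmod 2$ explicit and to note that $\mathcal{SG}$ takes only the values $0$ and $1$ in this game, so that ``the number of moves changes by one, hence the parity flips'' can be recorded faithfully as the equation $\mathcal{SG}(4n)=1-\mathcal{SG}(2n)$ rather than merely as a congruence.
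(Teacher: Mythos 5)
Your proof is correct and is essentially the paper's own argument: the paper states this corollary immediately after Lemma \ref{lem-nont-C1} with no separate proof, precisely because the intended reasoning is the identification $\mathcal{SG}(n)=C(n)\bmod 2$ (valid since \textsc{nontotient} is a game of no choice, as noted in the introduction) combined with $C(4n)=C(2n)+1$. Your write-up just makes that implicit step explicit, which is fine.
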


For example, knowing that $\mathcal{SG}(10)=0$, we again obtain Corollary \ref{cor-nont1}.  We end this section with some observations about the function $C(n)$ for even $n$.

\begin{lem}\label{lem-nont-C2}
    If $n$ is even and $2^{i-1} < n \leq 2^i$, then $C(n) \geq i$.
\end{lem}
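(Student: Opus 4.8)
The plan is to prove the equivalent statement that $C(n)\ge\lceil\log_2 n\rceil$ for every even $n$, since the hypothesis $2^{i-1}<n\le 2^i$ says exactly that $\lceil\log_2 n\rceil = i$. The engine of the whole argument is the observation that $g$ can at most halve an even number: if $m$ is even then each of the $m/2$ even residues in $\{1,\dots,m\}$ fails to be coprime to $m$, so $\phi(m)\le m/2$ and hence $g(m)=m-\phi(m)\ge m/2$. Thus one application of $g$ to an even number loses at most one binary digit, and $\lceil\log_2\rceil$ drops by at most $1$ per step.

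First I would record this halving bound, together with the trivial $g(m)<m$ for $m>1$. Next I would use Lemma \ref{lem-nont-parity} to control parity along the orbit: for even $m>2$, $g(m)$ is again even, so the sequence $n,\,g(n),\,g^2(n),\dots$ stays even until it reaches $2$, where $g(2)=1$ terminates it. Moreover $2$ is the \emph{only} even number with $g(m)=1$, since $g(m)=1$ forces $m$ to be prime. In particular, for even $n>2$ the value $g(n)$ is an even number with $n/2\le g(n)<n$.

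With these in hand the result follows by strong induction on even $n$. The base case $n=2$ gives $C(2)=1=\lceil\log_2 2\rceil$. For even $n>2$, the number $g(n)$ is even with $g(n)\ge n/2$, so the inductive hypothesis yields $C(g(n))\ge\lceil\log_2 g(n)\rceil\ge\lceil\log_2(n/2)\rceil=\lceil\log_2 n\rceil-1$; since $C(n)=C(g(n))+1$, we conclude $C(n)\ge\lceil\log_2 n\rceil$, which is the claim. Equivalently, one can telescope the halving bound along the orbit, $n\le 2\,g(n)\le 4\,g^2(n)\le\cdots\le 2^{C(n)}g^{C(n)}(n)=2^{C(n)}$, to get $n\le 2^{C(n)}$ directly, whence $2^{i-1}<n\le 2^{C(n)}$ forces $C(n)\ge i$.

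I do not expect a serious obstacle here; the only point requiring care is ensuring that the halving bound $g(m)\ge m/2$ is invoked only at even arguments, which is precisely what Lemma \ref{lem-nont-parity} guarantees, and verifying the terminal step from $2$ down to $1$ so that the induction (or the telescoping product) bottoms out correctly.
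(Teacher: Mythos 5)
Your proposal is correct and is essentially the paper's own argument: both hinge on the halving bound $\phi(m)\le m/2$ for even $m$ (the paper writes this as $\phi(2k)\le k$, giving $g(n)\ge k>2^{i-2}$) combined with Lemma \ref{lem-nont-parity} to keep the orbit even, then close with induction. Your induction on $n$ (equivalently, the telescoped inequality $n\le 2^{C(n)}$) versus the paper's induction on $i$, and your reformulation as $C(n)\ge\lceil\log_2 n\rceil$, are only cosmetic differences.
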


The least value we don't see equality is  $C(30)=6$.

\begin{proof}
    We proceed by induction on $i$, observing initial cases in the table above.  Suppose $2^{i-1} < n \leq 2^i$ and $n=2k$.  Then $\phi(n)\leq k$, so $g(n)\geq k>2^{i-2}$. By Lemma \ref{lem-nont-parity} $g(n)$ is also even, so we have by induction that $C(g(n))\geq i-1$.  Hence $C(n)\geq i$.    
\end{proof}

\begin{lem}
    If $p$ is an odd prime, then $C(2p) = C(2(p+1))$.
\end{lem}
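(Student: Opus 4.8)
The plan is to reduce both sides to the common quantity $C(p+1)$, using only one step of the defining recursion $C(n) = C(g(n)) + 1$ (valid for $n > 1$, since $g(n) = n - \phi(n) < n$ and $g(n)\geq 1$) together with Lemma \ref{lem-nont-C1}.

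First I would compute $g(2p)$. Because $p$ is an odd prime, $2p = 2\cdot p$ is a product of distinct primes, so $\phi(2p) = (2-1)(p-1) = p-1$, giving $g(2p) = 2p - (p-1) = p+1$. Since $p+1 > 1$, a single application of the recursion yields $C(2p) = C(p+1) + 1$. This is the only number-theoretic input on the left-hand side, and it is immediate.

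Next I would handle $C(2(p+1))$ by a completely different mechanism. The key observation is that $p+1$ is \emph{even} (as $p$ is odd), so $2(p+1) = 4\cdot\tfrac{p+1}{2}$ is genuinely a multiple of $4$ with $n := \tfrac{p+1}{2}$ a positive integer. Lemma \ref{lem-nont-C1} then applies directly: $C(2(p+1)) = C(4n) = C(2n) + 1 = C(p+1) + 1$. Comparing the two expressions gives $C(2p) = C(p+1) + 1 = C(2(p+1))$.

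The statement is short enough that there is essentially no obstacle; the one point requiring care is verifying that Lemma \ref{lem-nont-C1} is legitimately applicable, i.e. that $2(p+1)$ really is divisible by $4$. This is exactly where the hypothesis that $p$ is odd is used, and it is worth flagging that the two sides reach the common value $C(p+1)$ through distinct routes: the left side by one step of $g$ (which sends $2p$ to $p+1$), and the right side by the $\times 4$ recursion of Lemma \ref{lem-nont-C1}.
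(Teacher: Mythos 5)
Your proof is correct and is essentially identical to the paper's: both compute $g(2p)=p+1$ to get $C(2p)=C(p+1)+1$, and both apply Lemma \ref{lem-nont-C1} to $2(p+1)=4\cdot\tfrac{p+1}{2}$ to get $C(2(p+1))=C(p+1)+1$. Your version simply spells out the details (the computation of $\phi(2p)$ and the parity check that makes Lemma \ref{lem-nont-C1} applicable) that the paper's one-line proof leaves implicit.
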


\begin{proof}
 We have $g(2p)=p+1$, so $C(2p)=C(p+1)+1=C(2(p+1))$, by Lemma \ref{lem-nont-C1}.    
\end{proof}

The first time the conclusion does not hold is when $p=15$, since $C(30)=6$ and $C(32)=5$.

\begin{thm}
 Let $i\geq 1$.  The set $S_i=\{C(n)\mid 2^{i-1}\leq n\leq 2^i\text{ and }n\text{ is even}\}$ is an interval of $\mathbb{N}$.
\end{thm}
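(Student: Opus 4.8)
The plan is to induct on $i$, proving the slightly stronger statement that for $i\geq 2$ the set $S_i$ is an interval of integers whose minimum is exactly $i-1$; that is, $S_i=\{i-1,i,\dots,M_i\}$ for some $M_i\geq i$. The base cases $S_1=\{1\}$ and $S_2=\{1,2\}$ are immediate from the table. To see that $\min S_{i+1}=i$ I will use Lemma~\ref{lem-nont-C2}: applied with index $i+1$, it says every even $n$ with $2^i<n\leq 2^{i+1}$ has $C(n)\geq i+1$, while the endpoint $n=2^i$ is a power of two with $C(2^i)=i$ (since $g(2^j)=2^{j-1}$). Hence $i$ is attained, attained only at $2^i$, and every other even $n$ in $[2^i,2^{i+1}]$ contributes a value $\geq i+1$.

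Before the inductive step I will record two elementary facts. First, Lemma~\ref{lem-nont-C1} yields $C(2k)=C(k)+1$ whenever $k$ is even (apply it with $n=k/2$). Consequently the multiples of $4$ in $[2^i,2^{i+1}]$, namely $2k$ for even $k\in[2^{i-1},2^i]$, realize precisely the values $S_i+1$; by the induction hypothesis this is the integer interval $[i,M_i+1]$, so $[i,M_i+1]\subseteq S_{i+1}$. Second, for even $n$ one has $\phi(n)\leq n/2$ (the even residues are never coprime to $n$), so $g(n)=n-\phi(n)$ satisfies $n/2\leq g(n)<n$; combined with Lemma~\ref{lem-nont-parity} this shows that for even $n\in(2^i,2^{i+1}]$ the number $g(n)$ is again even and lies in $(2^{i-1},2^{i+1})$, i.e. either in the block just below or in the same block.

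The heart of the argument is a predecessor-closure step: I will show that if $v\in S_{i+1}$ and $v>i$, then $v-1\in S_{i+1}$. Together with $\min S_{i+1}=i$ this forces $S_{i+1}=\{i,\dots,\max S_{i+1}\}$. Fix an even witness $n$ with $C(n)=v$; since $v>i$ we have $n>2^i$. If $v\leq M_i+1$, then $v-1\in[i,M_i]\subseteq[i,M_i+1]\subseteq S_{i+1}$ and we are done. Otherwise $v>M_i+1$, and I claim the witness cannot satisfy $g(n)\leq 2^i$: if it did, then $g(n)$ would be even in $(2^{i-1},2^i]$, hence $C(g(n))\in S_i=[i-1,M_i]$ and $v=C(g(n))+1\leq M_i+1$, a contradiction. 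So $g(n)\in(2^i,2^{i+1})$ is even, making $g(n)$ itself an even element of $[2^i,2^{i+1}]$, and therefore $v-1=C(g(n))\in S_{i+1}$, as desired. This gives predecessor closure, hence $S_{i+1}$ is an interval with minimum $i=(i+1)-1$, completing the induction.

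The step I expect to be the crux is this last dichotomy. The point is that the only way $S_{i+1}$ can contain a value exceeding $M_i+1$ is through a witness $n$ whose single step $g(n)$ stays inside the top block, and in that situation $g(n)$ is already a witness for $v-1$; values at or below $M_i+1$ are instead supplied in advance by the multiples of $4$. Making this airtight hinges on the two preliminary estimates — that $g(n)$ is even (Lemma~\ref{lem-nont-parity}) and that $g(n)>2^{i-1}$ (from $\phi(n)\leq n/2$) — which guarantee that $g(n)$ always lands in a block on which either the induction hypothesis or the set $S_{i+1}$ itself already provides the needed value.
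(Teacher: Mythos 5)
Your proof is correct and follows essentially the same route as the paper's: induction on the block index, using Lemma \ref{lem-nont-C1} (via multiples of $4$) to transport the previous interval up by one, Lemma \ref{lem-nont-C2} to pin down the minimum, and a predecessor-closure step via $g$ for any value exceeding $M_i+1$. If anything you are more careful than the paper: you account for the included endpoint $2^{i-1}$ (so the minimum of $S_i$ is $i-1$, not $i$ as the paper's proof asserts), and you make explicit the inequality $\phi(n)\leq n/2$ for even $n$, which is exactly what justifies the paper's implicit claim that $g(y)$ cannot fall below the previous block.
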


\begin{proof}
We proceed by induction, again noting initial values in the chart above.  For each $i\geq 1$, Lemma \ref{lem-nont-C2} implies that the minimal possible value of $S_i$ is $i$, and this is in fact obtained by $C(2^i)$.  

Now let $i\geq 2$, and suppose that the maximal value in $S_{i-1}$ is $M$.  By induction $S_{i-1}=\{i-1, i, \ldots, M\}$.  Lemma \ref{lem-nont-C1} then implies that $\{i, i+1, \ldots, M+1\}\subseteq S_i$.

Next suppose that $\{i, i+1, \ldots, M+1\}\neq S_i$.  Then there is some even $2^{i-1}\leq y\leq 2^i$ for which $C(y)>M+1$.  In this case, since $g(y)$ is even and $C(g(y))>M$, we must have $2^{i-1}\leq g(y)$.  Thus $C(g(y))=C(y)-1\in S_i$.
\end{proof}

\end{document}